\newtheorem{theorem}{Theorem}
\newtheorem{lemma}{Lemma}
\newcommand{\integers}{{\mathbb Z}}
\newcommand{\realnos}{{\mathbb R}}
\def\ov{\overline}
\def\Beta{{\rm B}}
\def\Nu{{\rm N}}
\def\Mu{{\rm M}}
\def\Tau{{\rm T}}
\def\Eta{{\rm H}}
\def\Kappa{{\rm K}}
\def\Rho{{\rm P}}
\begin{document}

\title{A Bieberbach Theorem for Crystallographic Group Extensions}

\author{John G. Ratcliffe and Steven T. Tschantz}

\address{Department of Mathematics, Vanderbilt University, Nashville, TN 37240
\vspace{.1in}}

\email{j.g.ratcliffe@vanderbilt.edu}

\date{}

\begin{abstract}
In this paper we prove that for each dimension $n$ 
there are only finitely many isomorphism classes of pairs of groups $(\Gamma,\Nu)$ 
such that $\Gamma$ is an $n$-dimensional crystallographic group and 
$\Nu$ is a normal subgroup of $\Gamma$ such that $\Gamma/\Nu$ is a crystallographic group. 
\end{abstract}

\maketitle

\section{Introduction} %1
An {\it $n$-dimensional crystallographic group} ({\it $n$-space group}) 
is a discrete group $\Gamma$ of isometries of Euclidean $n$-space $E^n$ 
whose orbit space $E^n/\Gamma$ is compact. 
The 3-space groups are the symmetry groups of crystalline structures,  
and so are of fundamental importance in the science of crystallography. 

In response to Hilbert's Problem 18, L. Bieberbach \cite{B} proved that for each dimension $n$ 
there are only finitely many isomorphism classes of $n$-space groups. 
In this paper we prove a relative version of Bieberbach's theorem. 
We prove that for each dimension $n$ 
there are only finitely many isomorphism classes of pairs of groups $(\Gamma,\Nu)$ 
such that $\Gamma$ is an $n$-space group and 
$\Nu$ is a normal subgroup of $\Gamma$ such that $\Gamma/\Nu$ is a space group. 

Our relative Bieberbach theorem has a geometric interpretation in the theory of flat orbifolds.  
By Theorems 7, 8, and 10 of \cite{R-T} the isomorphism classes of pairs of groups $(\Gamma, \Nu)$ 
such that $\Gamma$ is an $n$-space group and $\Nu$ is a normal subgroup of $\Gamma$ such that $\Gamma/\Nu$ is a space group  
correspond to the affine equivalence classes of geometric orbifold fibrations of compact,  
connected, flat $n$-orbifolds. 
Therefore, our relative Bieberbach theorem is equivalent to the theorem that for each dimension $n$ there are only finitely many 
affine equivalence classes of geometric orbifold fibrations of compact,  connected, flat $n$-orbifolds. 
This is known for $n = 3$ by the work of Conway-Friedrichs-Huson-Thurston\cite{C-T} and Ratcliffe-Tschantz \cite{R-T}. 

We now outline the proof of our relative Bieberbach theorem.
Let $m$ be a positive integer less than $n$. 
Let $\Mu$ be an $m$-space group and 
let $\Delta$ be an $(n-m)$-space group.  
Let $\mathrm{Iso}(\Delta,\Mu)$ be the set of isomorphism classes 
of pairs $(\Gamma, \Nu)$ 
where $\Nu$ is a normal subgroup of an $n$-space group $\Gamma$ 
such that $\Nu$ is isomorphic to $\Mu$ and $\Gamma/\Nu$ is isomorphic to $\Delta$. 
As there are only finitely many isomorphism classes of the groups $\Delta$ and $\Mu$ by 
Bieberbach's theorem \cite{B}, it suffices to prove that $\mathrm{Iso}(\Delta,\Mu)$ is finite. 

Next, we define a set $\mathrm{Out}(\Delta,\mathrm{M})$ in terms of $\mathrm{Out}(\Delta)$ and $\mathrm{Out}(\mathrm{M})$. 
That the set $\mathrm{Out}(\Delta,\mathrm{M})$ is finite follows easily from a theorem of Baues and Grunewald \cite{B-G} 
that the outer automorphism group of a crystallographic group is an arithmetic group. 
We define a function $\omega: \mathrm{Iso}(\Delta,\Mu) \to \mathrm{Out}(\Delta,\mathrm{M})$. 
We prove that $\mathrm{Iso}(\Delta,\Mu)$ is finite by showing that the fibers of $\omega$ are finite 
by a cohomology of groups argument. 

\section{Normal Subgroups of Space Groups}  % 2

A map $\phi:E^n\to E^n$ is an isometry of $E^n$ 
if and only if there is an $a\in E^n$ and an $A\in {\rm O}(n)$ such that 
$\phi(x) = a + Ax$ for each $x$ in $E^n$. 
We shall write $\phi = a+ A$. 
In particular, every translation $\tau = a + I$ is an isometry of $E^n$. 

Let $\Gamma$ be an $n$-space group. 
Define $\eta:\Gamma \to {\rm O}(n)$ by $\eta(a+A) = A$. 
Then $\eta$ is a homomorphism whose kernel is the group $\mathrm{T}$ 
of translations in $\Gamma$. 
The image of $\eta$ is a finite group $\Pi$ called the 
{\it point group} of $\Gamma$.

Let $\Eta$ be a subgroup of an $n$-space group $\Gamma$. 
Define the {\it span} of $\Eta$ by the formula
$${\rm Span}(\Eta) = {\rm Span}\{a\in E^n:a+I\in \Eta\}.$$
Note that ${\rm Span}(\Eta)$ is a vector subspace $V$ of $E^n$.  
Let $V^\perp$ denote the orthogonal complement of $V$ in $E^n$. 

\begin{theorem} {\rm (Theorem 2 \cite{R-T})} % 1
Let ${\rm N}$ be a normal subgroup of an $n$-space group $\Gamma$, 
and let $V = {\rm Span}(\Nu)$. 
\begin{enumerate}
\item If $b+B\in\Gamma$, then $BV=V$. 
\item If $a+A\in \Nu$,  then $a\in V$ and $ V^\perp\subseteq{\rm Fix}(A)$. 
\item The group $\Nu$ acts effectively on each coset $V+x$ of $V$ in $E^n$ 
as a space group of isometries of $V+x$. 
\end{enumerate}
\end{theorem}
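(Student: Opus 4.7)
The plan is to prove (1), (2), and (3) in that order, relying on two ingredients: normality of $\Nu$ in $\Gamma$, and Bieberbach's theorem that the translation lattice $L = \{c \in E^n : c+I \in \mathrm{T}\}$ spans $E^n$ as a vector space.

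For (1), I will conjugate a pure translation $c+I \in \Nu$ by an arbitrary $b+B \in \Gamma$; a direct application of the affine composition rule gives $(b+B)(c+I)(b+B)^{-1} = Bc + I$, which lies in $\Nu$ by normality. Hence $B$ sends the defining generating set of $V$ into $V$, so $BV \subseteq V$, with equality because $B$ is invertible and preserves dimension.

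For (2), I will first show $V^\perp \subseteq \mathrm{Fix}(A)$ and then deduce $a \in V$. Given $a+A \in \Nu$, the idea is to conjugate it by an arbitrary translation $c+I \in \mathrm{T}$ from $\Gamma$ (\emph{not} from $\Nu$): this produces an element of $\Nu$ whose $\mathrm{O}(n)$-part is still $A$ but whose translation part is shifted by $(I-A)c$. Multiplying by $(a+A)^{-1}$ cancels the rotational part and extracts the pure translation $(I-A)c + I \in \Nu$, so $(I-A)c \in V$ for every $c \in L$. Since $L$ spans $E^n$, this gives $(I-A)E^n \subseteq V$. Part (1) applied to $A$, together with orthogonality, forces $A$ to preserve $V^\perp$; combining this with $(I-A)V^\perp \subseteq V$ yields $(I-A)V^\perp \subseteq V \cap V^\perp = \{0\}$, so $A$ fixes $V^\perp$ pointwise. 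To conclude $a \in V$, I will raise $a+A$ to the $k$-th power, where $k$ is the order of $A$ in the point group, producing a pure translation $b+I \in \Nu$ with $b = \sum_{i=0}^{k-1} A^i a$. Because $A$ now acts as the identity on $V^\perp$, the $V^\perp$-component of $b$ collapses to $k\,a_\perp$, and since $b \in V$ this forces $a_\perp = 0$.

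For (3), once (2) is in hand, every $a+A \in \Nu$ sends each coset $V+x$ to itself, and effectiveness comes from unwinding the definitions (only the identity of $\Nu$ acts trivially on any single coset, since $A|_{V^\perp} = I$ already). The restricted action is discrete because $\Nu$ is discrete in the isometry group of $E^n$ and sits inside the closed subgroup of isometries stabilizing the coset; cocompactness on each $V+x$ follows because $\Nu \cap \mathrm{T}$ is by construction a full lattice in $V$ while $\Nu/(\Nu \cap \mathrm{T})$ is finite (it embeds in $\Gamma/\mathrm{T}$). The step I expect to be the crux is the proof that $V^\perp \subseteq \mathrm{Fix}(A)$ in (2): this is the one place where normality of $\Nu$ in the \emph{full} space group $\Gamma$ interacts non-trivially with Bieberbach's spanning property of $L$, and everything else in the theorem is essentially bookkeeping once it is secured.
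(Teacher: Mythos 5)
The paper does not actually prove this theorem—it is quoted as Theorem 2 of \cite{R-T}—so there is no in-paper proof to compare against, but your argument is correct and is essentially the standard one: conjugating translations of $\Nu$ by $b+B$ gives (1); conjugating $a+A$ by translations of $\Gamma$ and using Bieberbach's spanning property yields $(I-A)E^n\subseteq V$, whence $V^\perp\subseteq\mathrm{Fix}(A)$ and (via the $k$-th power trick) $a\in V$; and (3) follows because $\{a:a+I\in\Nu\}$ is a full-rank lattice in $V$ of finite index in $\Nu$. The only spot worth tightening is discreteness in (3): rather than appealing to $\Nu$ sitting in the closed stabilizer of the coset (a discrete subgroup of a closed subgroup need not have discrete image under restriction in general), note that $\Nu$ acts discontinuously on the invariant closed subset $V+x$ because it acts discontinuously on $E^n$, which together with effectiveness gives discreteness of the restricted group.
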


Let $\Gamma$ be an $n$-space group. 
The {\it dimension} of $\Gamma$ is $n$. 
If $\Nu$ is a normal subgroup of $\Gamma$, 
then $\Nu$ is a $m$-space group with $m= \mathrm{dim}(\mathrm{Span}(\Nu))$ 
by Theorem 1(3). 

\vspace{.15in}
\noindent{\bf Definition:}
Let $\Nu$ be a normal subgroup $\Nu$ of an $n$-space group $\Gamma$, and let $V = {\rm Span}(\Nu)$.  Then $\Nu$ is said to be a {\it complete normal subgroup} of $\Gamma$ if 
$$\Nu= \{a+A\in \Gamma: a\in V\ \hbox{and}\ V^\perp\subseteq{\rm Fix}(A)\}.$$

\begin{lemma}  {\rm (Lemma 1 \cite{R-T})} % 1
Let $\Nu$ be a complete normal subgroup of an $n$-space group $\Gamma$, 
and let $V={\rm Span}(\Nu)$. 
Then $\Gamma/\Nu$ acts effectively as a space group of isometries of $E^n/V$ 
by the formula
$({\rm N}(b+B))(V+x) = V+ b+Bx.$
\end{lemma}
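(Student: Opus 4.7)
The plan is to verify each assertion of the lemma in turn: well-definedness of the proposed formula, that it gives a group action by isometries of $E^n/V$, effectiveness, and finally that the image in $\mathrm{Isom}(E^n/V)$ is a space group. The key geometric input throughout is the identification $E^n/V \cong V^\perp$ via orthogonal projection $y \mapsto y_{V^\perp}$; under this identification the proposed action becomes $y \mapsto b_{V^\perp} + (B|_{V^\perp})y$, which is what I would reason about.

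First I would check well-definedness in both variables. Independence of the choice of representative $x$ of the coset $V + x$ is immediate from Theorem 1(1), since $BV = V$ gives $B(V+x) = V + Bx$. Independence of the coset representative $b+B$ of $\Nu(b+B)$ reduces to showing $V + a + Ab + ABx = V + b + Bx$ whenever $a+A \in \Nu$. Here Theorem 1(2) is the crucial input: $a \in V$, and since $A \in \mathrm{O}(n)$ with $V^\perp \subseteq \mathrm{Fix}(A)$, the operator $A-I$ vanishes on $V^\perp$ and sends $V$ into $V$, so $(A-I)y \in V$ for every $y \in E^n$. Hence both $Ab - b$ and $ABx - Bx$ lie in $V$, giving the required equality mod $V$.

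Given well-definedness, checking that the formula is a group action is a direct computation using $(b' + B')(b+B) = (b' + B'b) + B'B$. The action is by isometries because under the identification with $V^\perp$ it takes the affine form $y \mapsto b_{V^\perp} + (B|_{V^\perp}) y$ with $B|_{V^\perp} \in \mathrm{O}(V^\perp)$ (using $BV^\perp = V^\perp$, which follows from $BV = V$ by orthogonality). Effectiveness is where the completeness hypothesis does its real work: if $\Nu(b+B)$ acts trivially, taking $x=0$ forces $b \in V$, and then $(B-I)x \in V$ for all $x$; restricting to $x \in V^\perp$ gives $(B-I)V^\perp \subseteq V \cap V^\perp = 0$, so $V^\perp \subseteq \mathrm{Fix}(B)$. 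By the defining property of a complete normal subgroup, $b+B \in \Nu$.

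The main obstacle I expect is showing that $\Gamma/\Nu$ is an $(n-m)$-space group of $V^\perp$, i.e., both cocompact and discrete. Cocompactness should follow quickly: the orthogonal projection $E^n \to V^\perp$ is equivariant with respect to $\Gamma \to \Gamma/\Nu$, so it induces a continuous surjection $E^n/\Gamma \to V^\perp/(\Gamma/\Nu)$, and the former is compact by hypothesis. Discreteness is the delicate point. My approach would be to prove proper discontinuity directly: for a compact $K \subseteq V^\perp$, I would choose a compact fundamental domain $F \subseteq V+0$ for the $\Nu$-action on the coset $V$ (available by Theorem 1(3)), lift each $\Nu(b+B)$ with $(\Nu(b+B))K \cap K \neq \emptyset$ to some $b+B \in \Gamma$ carrying $F \cap \pi^{-1}(K)$ into the compact set $\pi^{-1}(K)$ (adjusting by elements of $\Nu$ using cocompactness on each coset), and invoke the proper discontinuity of $\Gamma$ on $E^n$ to conclude only finitely many cosets $\Nu(b+B)$ arise. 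This gives discreteness, and combined with cocompactness and effectiveness completes the proof that $\Gamma/\Nu$ is a space group of $E^n/V$.
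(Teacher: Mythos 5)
The paper does not actually prove this statement; it is quoted as Lemma~1 of \cite{R-T}, so there is no in-paper proof to compare against. Judged on its own, your argument is correct in structure and identifies the right ingredients: Theorem~1(1) for well-definedness in the variable $x$, Theorem~1(2) for independence of the representative of $\Nu(b+B)$, the identification $E^n/V\cong V^\perp$ for the isometry claim, and---crucially---the completeness hypothesis used exactly where it is needed, namely to upgrade ``$\Nu(b+B)$ acts trivially'' (which yields $b\in V$ and $V^\perp\subseteq\mathrm{Fix}(B)$) to ``$b+B\in\Nu$'' in the effectiveness step. The cocompactness argument via the equivariant orthogonal projection is also fine.

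The one place the write-up needs repair is the discreteness step: $\pi^{-1}(K)=V\times K$ is not compact, so ``carrying $F\cap\pi^{-1}(K)$ into the compact set $\pi^{-1}(K)$'' does not parse as written. The fix is the one you gesture at with the phrase ``adjusting by elements of $\Nu$'': take $C=\ov{D}\times K$, where $\ov{D}$ is the compact closure of a fundamental domain for $\ov\Nu$ acting on $V$ (available by Theorem~1(3)). If $(b'+B')K\cap K\neq\emptyset$, then after multiplying $b+B$ on the left by a suitable $\nu\in\Nu$ one gets $C\cap\nu(b+B)C\neq\emptyset$, and the proper discontinuity of $\Gamma$ on $E^n$ bounds the number of such cosets, giving discontinuity and hence discreteness of the image in $\mathrm{Isom}(V^\perp)$. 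This is precisely the technique the paper itself deploys in the proof of Theorem~2 (the sets $K_i$ and $F_i$ there), so your route is consistent with the paper's methods; with that correction the argument is complete.
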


\noindent{\bf Remark 1.} A normal subgroup $\Nu$ of a space group $\Gamma$ is complete 
precisely when $\Gamma/\Nu$ is a space group by Theorem 5 of \cite{R-T}.

\vspace{.15in}

Let $\Nu$ be a complete normal subgroup of an $n$-space group $\Gamma$, 
let $V = \mathrm{Span}(\Nu)$, and let $V^\perp$ be the orthogonal complement of $V$ in $E^n$.  
Let $\gamma \in \Gamma$.   Then $\gamma = b+B$ with $b\in E^n$ and $B\in \mathrm{O}(n)$. 
Write $b = \overline b + b'$ with $\overline b \in V$ and $b' \in V^\perp$. 
Let $\overline B$ and $B'$ be the orthogonal transformations of $V$ and $V^\perp$, respectively, 
obtained by restricting $B$. 
Let $\overline \gamma = \overline b + \overline B$ and $\gamma' = b' + B'$. 
Then $\overline \gamma$ and $\gamma'$ are isometries of $V$ and $V^\perp$, respectively.

Euclidean $n$-space $E^n$ decomposes as the Cartesian product $E^n = V \times V^\perp$. 
Let $x\in E^n$.  Write $x = v+w$ with $v\in V$ and $w\in V^\perp$. Then 
$$(b+B)x = b+Bx = \overline{b}+b' + Bv + Bw = (\overline{b}+\overline{B}v) + (b'+B'w).$$
Hence the action of $\Gamma$ on $E^n$ corresponds to the diagonal action of $\Gamma$ 
on $V\times V^\perp$ defined by the formula
$$\gamma(v,w) = (\overline{\gamma}v,\gamma'w).$$
Here $\Gamma$ acts on both $V$ and $V^\perp$ via isometries. 
The kernel of the corresponding homomorphism from $\Gamma$ to $\mathrm{Isom}(V)$ 
is the group
$$\Kappa = \{b+B\in\Gamma: b \in V^\perp\ \hbox{and}\ V \subseteq \mathrm{Fix}(B)\}.$$
We call $\Kappa$ the {\it kernel of the action} of $\Gamma$ on $V$. 
The group $\Kappa$ is a normal subgroup of $\Gamma$. 
The action of $\Gamma$ on $V$ induces an effective action of $\Gamma/\Kappa$ on $V$ via isometries.  
The group $\Gamma/\Kappa$ acts on $V$ as a discrete group of isometries if and only if 
$\Gamma/\Nu\Kappa$ is a finite group by Theorem 3(4) of \cite{R-T-Cal}. 

The group $\Nu$ is the kernel of the action of $\Gamma$ on $V^\perp$, and so the action 
of $\Gamma$ on $V^\perp$ induces  an effective action of $\Gamma/\Nu$ on $V^\perp$ via isometries. 
Orthogonal projection from $E^n$ to $V^\perp$ induces an isometry from $E^n/V$ to $V^\perp$. 
Hence $\Gamma/\Nu$ acts on $V^\perp$ as a space group of isometries by Lemma 1. 

Let $\ov \Gamma = \{\ov \gamma: \gamma \in \Gamma\}$. 
If $\gamma \in \Gamma$, then $(\overline{\gamma})^{-1} = \overline{\gamma^{-1}}$, and 
if $\gamma_1, \gamma_2 \in \Gamma$, 
then $\overline{\gamma_1}\,\overline{\gamma_2}= \overline{\gamma_1\gamma_2}$.   
Hence $\ov \Gamma$ is a subgroup of $\mathrm{Isom}(V)$. 
The map $\Beta: \Gamma \to \ov \Gamma$ defined by $\Beta(\gamma)= \ov\gamma$ is an epimorphism 
with kernel $\Kappa$. 
The group $\ov\Gamma$ is a discrete subgroup of $\mathrm{Isom}(V)$ 
if and only if $\Gamma/\Nu\Kappa$ is finite by Theorem 3(4) of \cite{R-T-Cal}. 

Let $\Gamma' = \{\gamma' : \gamma \in \Gamma\}$.  
If $\gamma \in \Gamma$, then $(\gamma')^{-1} = (\gamma^{-1})'$, and 
if $\gamma_1, \gamma_2 \in \Gamma$, 
then $\gamma_1'\gamma_2' = (\gamma_1\gamma_2)'$.   
Hence $\Gamma'$ is a subgroup of $\mathrm{Isom}(V^\perp)$. 
The map $\Rho' : \Gamma \to \Gamma'$ 
defined by $\Rho'(\gamma) = \gamma'$ is epimorphism with kernel $\Nu$, 
since $\Nu$ is a complete normal subgroup of $\Gamma$.  
Hence $\Rho'$ induces an isomorphism $\Rho: \Gamma/\Nu \to \Gamma'$ defined by $\Rho(\Nu\gamma) = \gamma'$. 
The group $\Gamma'$ is a space group of isometries of $V^\perp$ 
with $V^\perp/\Gamma' = V^\perp/(\Gamma/\Nu)$. 

Let $\ov \Nu = \{\ov \nu: \nu \in \Nu\}$.  
Then $\ov \Nu$ is a subgroup of $\mathrm{Isom}(V)$. 
The map $\Beta: \Nu \to \ov \Nu$ defined by $\Beta(\nu) = \ov\nu$ is an isomorphism. 
The group $\ov\Nu$ is a space group of isometries of $V$ with $V/\ov{\Nu} = V/\Nu$. 

The action of $\Gamma$ on $V$ induces an action of $\Gamma/\Nu$ on $V/\Nu$ 
defined by 
$$(\Nu\gamma)(\Nu v) = \Nu \overline{\gamma}v.$$ 
The action of $\Gamma/\Nu$ on $V/\Nu$ determines a homomorphism 
$$\Xi : \Gamma/\Nu \to \mathrm{Isom}(V/\Nu)$$
defined by $\Xi(\Nu \gamma) = \overline \gamma_\star$, where $\overline \gamma_\star: V/\Nu \to V/\Nu$ 
is defined by $\overline\gamma_\star(\Nu v) = \Nu \overline\gamma(v)$.

\begin{theorem} % 2
Let $\Mu$ be an $m$-space group, let $\Delta$ be an $(n-m)$-space group, 
and let $\Theta:\Delta \to \mathrm{Isom}(E^m/\Mu)$ be a homomorphism. 
Identify $E^n$ with $E^m\times E^{n-m}$, and extend $\Mu$ to a subgroup $\Nu$ of $\mathrm{Isom}(E^n)$
such that the point group of $\Nu$ acts trivially on $(E^m)^\perp= E^{n-m}$. 
Then there exists a unique $n$-space group $\Gamma$ containing $\Nu$ as a complete normal subgroup 
such that $\Gamma' = \Delta$, and if $\Xi:\Gamma/\Nu \to \mathrm{Isom}(E^m/\Nu)$ is 
the homomorphism induced by the action of $\Gamma/\Nu$ on $E^m/\Nu$, then $\Xi = \Theta\Rho$ 
where $\Rho:\Gamma/\Nu \to \Gamma'$ is the isomorphism defined by $\Rho(\Nu\gamma) = \gamma'$ 
for each $\gamma\in\Gamma$. 
\end{theorem}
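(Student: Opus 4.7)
The plan is to construct $\Gamma$ explicitly inside $\mathrm{Isom}(E^n)$ as the set of isometries of product form compatible with $\Theta$. The starting point is the standard flat-orbifold fact that the natural map $\alpha\mapsto\alpha_\star$ from the normalizer of $\Mu$ in $\mathrm{Isom}(E^m)$ onto $\mathrm{Isom}(E^m/\Mu)$ is surjective with kernel $\Mu$; equivalently, every isometry of the flat orbifold $E^m/\Mu$ lifts to an isometry of $E^m$ normalizing $\Mu$, uniquely modulo $\Mu$.  With this in hand I define
$$\Gamma=\{\alpha\times\delta\in\mathrm{Isom}(E^n) : \delta\in\Delta,\ \alpha\ \hbox{normalizes}\ \Mu,\ \alpha_\star=\Theta(\delta)\},$$
where $\alpha\times\delta$ denotes the isometry acting by $\alpha$ on $E^m$ and by $\delta$ on $E^{n-m}$.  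Multiplicativity of $\alpha\mapsto\alpha_\star$ and of $\Theta$ makes $\Gamma$ a group, and the second-factor projection $\Gamma\to\Delta$ is a surjection (by the lifting property) whose kernel equals $\Nu$, producing the short exact sequence $1\to\Nu\to\Gamma\to\Delta\to 1$.

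Next I verify that $\Gamma$ is an $n$-space group.  For discreteness, if $(\alpha_k,\delta_k)\to e$ in $\Gamma$, then $\delta_k\to e$ in the discrete group $\Delta$ forces $\delta_k=e$ for large $k$, and then $\alpha_k\in\Mu$ with $\alpha_k\to e$ forces $\alpha_k=e$.  For cocompactness, $E^n/\Nu$ is canonically $(E^m/\Mu)\times E^{n-m}$, and the induced $\Gamma/\Nu\cong\Delta$-action sends $(\Mu v,w)$ to $(\Theta(\delta)(\Mu v),\delta w)$; compactness of $E^m/\Mu$ together with cocompactness of $\Delta$ on $E^{n-m}$ then gives compactness of $E^n/\Gamma$.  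Completeness of $\Nu$ in $\Gamma$ follows because any $(\alpha,\delta)\in\Gamma$ whose translation lies in $E^m$ and whose orthogonal part fixes $E^{n-m}$ must have $\delta=\mathrm{id}$, whence $\alpha_\star=\mathrm{id}$ and $\alpha\in\Mu$.  The identity $\Gamma'=\Delta$ is just the surjectivity of the projection, and $\Xi=\Theta\Rho$ follows from $\overline{\alpha\times\delta}=\alpha$, $(\alpha\times\delta)'=\delta$, and the defining relation $\alpha_\star=\Theta(\delta)$.

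For uniqueness, suppose $\Gamma_1$ and $\Gamma_2$ both satisfy the conclusions, and let $\gamma_1=(\alpha_1,\delta)\in\Gamma_1$.  Since $\Gamma_2'=\Delta$ there exists $\gamma_2=(\alpha_2,\delta)\in\Gamma_2$; applying $\Xi=\Theta\Rho$, which holds in both groups, gives $(\alpha_1)_\star=\Theta(\delta)=(\alpha_2)_\star$, so $\alpha_1=\mu\alpha_2$ for some $\mu\in\Mu$.  Hence $\gamma_1=(\mu,\mathrm{id})\gamma_2\in\Nu\Gamma_2=\Gamma_2$, and symmetry yields $\Gamma_1=\Gamma_2$.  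I expect the one step that needs real care is the orbifold lifting property — that every isometry of $E^m/\Mu$ comes from an isometry of $E^m$ normalizing $\Mu$ — since the rest of the argument is essentially bookkeeping around that single fact.
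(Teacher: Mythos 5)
Your proof is correct and follows essentially the same route as the paper: both hinge on the lifting lemma for isometries of $E^m/\Mu$ (every isometry of the orbifold lifts to an element of the normalizer of $\Mu$, uniquely modulo $\Mu$), and your set of product-form isometries $\alpha\times\delta$ with $\alpha_\star=\Theta(\delta)$ coincides with the paper's group generated by $\Nu$ together with chosen lifts $\hat\delta$, while your uniqueness argument is the same ``agree modulo $\Nu$'' computation. The only differences are presentational: you verify discreteness by isolating the identity and cocompactness via the quotient description $E^n/\Gamma\cong((E^m/\Mu)\times E^{n-m})/\Delta$, whereas the paper checks discontinuity of the action directly and exhibits a compact set mapping onto $E^n/\Gamma$.
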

\begin{proof}  Let $\delta \in \Delta$.  
By Lemma 1 of \cite{R-T-Isom}, there exists $\tilde\delta \in N_E(\Mu)$ such that $\tilde\delta_\star = \Theta(\delta)$. 
The isometry $\tilde\delta$ is unique up to multiplication by an element of $\Mu$. 
Let $\delta = d + D$, with $d\in E^{n-m}$ and $D\in \mathrm{O}(n-m)$, 
and let $\tilde\delta = \tilde d + \tilde D$, with $\tilde d \in E^m$ and $\tilde D \in \mathrm{O}(m)$.  
Let $\hat d = \tilde d + d$, and let $\hat D = \tilde D \times D$. 
Then $\hat D E^m = E^m$ and $\hat D E^{n-m} = E^{n-m}$. 
Let $\hat \delta = \hat d + \hat D$. 
Then $\hat \delta$ is an isometry of $E^n$ such that $\overline{\hat\delta} = \tilde\delta$ 
and $\big(\hat\delta\big)' = \delta$.  
The isometry $\hat \delta$ is unique up to multiplication by an element of $\Nu$. 
We have that 
$$\hat\delta^{-1} = -\hat D^{-1}\hat d + \hat D^{-1} = -\tilde D^{-1}\tilde d -D^{-1}d + \tilde D^{-1} \times D^{-1},$$
and so $\overline{\hat\delta^{-1} }= \big(\tilde \delta\big)^{-1}$ and  $(\hat\delta^{-1})' = \delta^{-1}$. 
We have that 
$$\ov{\hat\delta\Nu\hat\delta^{-1}} = \ov{\hat\delta}\ov\Nu\ov{\hat\delta^{-1}} = \tilde\delta\Mu\tilde\delta^{-1} = \Mu = \ov\Nu$$
and 
$$(\hat\delta\Nu\hat\delta^{-1})' = \hat\delta'\Nu'(\hat\delta^{-1})' = \delta\{I'\}\delta^{-1} = \{I'\}.$$
Therefore $\hat \delta\Nu\hat\delta^{-1}  = \Nu$. 

Let  $\Gamma$ be the subgroup of $\mathrm{Isom}(E^n)$ 
generated by $\Nu \cup \{\hat \delta : \delta \in \Delta\}$.  
Then $\Gamma$ contains $\Nu$ as a normal subgroup, 
and the point group of $\Gamma$ leaves $E^m$ invariant. 
Suppose $\gamma \in \Gamma$.  Then there exists $\nu \in \Nu$ and $\delta_1, \ldots, \delta_k \in \Delta$ 
and $\epsilon_1,\ldots, \epsilon_k \in \{\pm 1\}$ 
such that $\gamma = \nu \hat{\delta}_1^{\epsilon_1} \cdots \hat{\delta}_k^{\epsilon_k}$. 
Then we have 
$$\gamma' =  \nu' (\hat{\delta}_1^{\epsilon_1})' \cdots (\hat{\delta}_k^{\epsilon_k})' = \delta_1^{\epsilon_1}\cdots \delta_k^{\epsilon_k}.$$
Hence $\Gamma' = \Delta$, and 
we have an epimorphism $\Rho':\Gamma \to \Delta$ defined by $\Rho'(\gamma) = \gamma'$. 
The group $\Nu$ is in the kernel of $\Rho'$, and so $\Rho'$ induces an epimorphism $\Rho: \Gamma/\Nu \to \Delta$ 
defined by $\Rho(\Nu\gamma) = \gamma'$. 
Suppose $\Rho(\Nu\gamma) = I'$.  By Lemma 1 of \cite{R-T-Isom}, we have that 
\begin{eqnarray*}
 \Rho(\Nu\gamma) = I' \ \ \Rightarrow \ \  \gamma' = I' & \Rightarrow  & \delta_1^{\epsilon_1}\cdots \delta_k^{\epsilon_k} =I' \\
       			     & \Rightarrow & \Theta(\delta_1)^{\epsilon_1} \cdots \Theta(\delta_k)^{\epsilon_k} = \ov I_\star\\
			    & \Rightarrow &(\tilde \delta_1)_\star^{\epsilon_1} \cdots (\tilde \delta_k)_\star^{\epsilon_k} = \ov I_\star\\
		             & \Rightarrow &\Mu(\tilde \delta_1)^{\epsilon_1} \cdots \Mu(\tilde \delta_k)^{\epsilon_k} = \Mu\\
			    & \Rightarrow &\Mu (\tilde \delta_1)^{\epsilon_1} \cdots (\tilde \delta_k)^{\epsilon_k} = \Mu \\
		             & \Rightarrow &\Mu \overline{\hat \delta_1^{\epsilon_1}} \cdots \overline{\hat \delta_k^{\epsilon_k}} = \Mu
			   \ \  \Rightarrow \ \ \Mu \overline\gamma = \Mu  \ \  \Rightarrow \ \ \overline\gamma \in \Mu. 
\end{eqnarray*}
As $\gamma' = I'$ and $\overline \gamma \in \ov\Nu$, we have that $\gamma \in \Nu$. 
Thus $\Rho$ is an isomorphism. 

We next show that $\Gamma$ acts discontinuously on $E^n$. 
Let $C$ be a compact subset of $E^n$. 
Let $K$ and $L$ be the orthogonal projections of $C$ into $E^m$ and $E^{n-m}$, respectively. 
Then $C \subseteq K \times L$.  
As $\Delta$ acts discontinuously on $E^{n-m}$, there exists only finitely many elements $\delta_1, \ldots, \delta_k$ of $\Delta$  
such that $L\cap \delta_iL \neq \emptyset$ for each $i$. 
Let $\gamma_i = \hat \delta_i$ for $i = 1,\ldots, k$. 
The set $K_i = K \cup \overline \gamma_i(K)$ is compact for each $i = 1,\ldots, k$. 
As $\Nu$ acts discontinuously on $E^m$, there is a finite subset $F_i$ of $\Nu$ such that $K_i \cap \nu K_i \neq \emptyset$ 
only if $\nu \in F_i$ for each $i$.  Hence $K \cap \nu\overline\gamma_i(K)\neq\emptyset$ only if $\nu \in F_i$ for each $i$. 
The set $F = F_1\gamma_1\cup \cdots \cup F_k\gamma_k$ is a finite subset of $\Gamma$.  
Suppose $\gamma \in \Gamma$ and  $C\cap \gamma C \neq \emptyset$. 
Then $L \cap \gamma' L \neq \emptyset$, and so $ \gamma' = \delta_i$ for some $i$. 
Hence $\gamma = \nu \gamma_i$ for some $\nu \in \Nu$. 
Now we have that $K\cap \overline \gamma K \neq \emptyset$, and so $K\cap \nu\overline\gamma_iK \neq \emptyset$. 
Hence $\nu \in F_i$.  Therefore $\gamma \in F$.  Thus $\Gamma$ acts discontinuously on $E^n$. 
Therefore $\Gamma$ is a discrete subgroup of $\mathrm{Isom}(E^n)$ by Theorem 5.3.5 of \cite{R}. 

Let $D_\Mu$ and $D_\Delta$ be fundamental domains for $\Mu$ in $E^m$ and $\Delta$ in $E^{n-m}$, respectively. 
Then their topological closures $\overline{D}_\Mu$ and $\overline{D}_\Delta$ are compact sets. 
Let $x \in E^n$.  Write $x = \overline x + x'$ with $\overline x \in E^m$ and $x' \in E^{n-m}$. 
Then there exists $\delta \in \Delta$ such that $\delta x' \in \overline D_\Delta$, 
and there exists $\nu \in \Nu$ such that $\nu \tilde\delta\overline x \in \overline D_\Mu$. 
We have that 
$$\nu\hat \delta x = \overline{ \nu\hat\delta} \overline x +  (\nu \hat\delta)'x' = \nu\tilde \delta \overline x + \delta x' 
\in \overline D_\Mu \times \overline D_\Delta.$$
Hence the quotient map $\pi: E^n \to E^n/\Gamma$ maps the compact set $\overline D_\Mu \times \overline D_\Delta$ 
onto $E^n/\Gamma$.  Therefore $E^n/\Gamma$ is compact.  Thus $\Gamma$ is an $n$-space group. 

Let $\Xi: \Gamma/\Nu \to \mathrm{Isom}(E^n/\Nu)$ be the homomorphism induced by the action of $\Gamma/\Nu$ on $E^m/\Nu$. 
Let $\gamma \in \Gamma$.  Then there exists $\delta \in \Delta$ such that $\Nu\gamma = \Nu\hat \delta$. 
Then $\gamma' = (\hat\delta)' = \delta$, and we have that $\Xi = \Theta\Rho$, since 
$$\Xi(\Nu\gamma) = \Xi(\Nu\hat\delta) = (\overline{\hat\delta})_\star = \tilde \delta_\star = 
\Theta(\delta) = \Theta(\gamma') = \Theta\Rho(\Nu\gamma).$$

Suppose $\gamma$ is an isometry of $E^n$ such that $\gamma\Nu\gamma^{-1} = \Nu$ 
and $\gamma' \in \Delta$ and $\overline\gamma_\star = \Theta(\gamma')$. 
Then $\widehat{\gamma'} \in \Gamma$. 
Now $\overline\gamma_\star = \widetilde{\gamma'}_\star$, 
and so $\overline\gamma = \ov\nu\widetilde{\gamma'}$ for some $\nu\in \Nu$ by Lemma 1 of \cite{R-T-Isom}. 
Then $\gamma = \nu\widehat{\gamma'}$.  Hence $\gamma\in \Gamma$. 
Thus $\Gamma$ is the unique $n$-space group that contains $\Nu$ as a complete normal subgroup such that $\Gamma' = \Delta$ 
and $\Xi = \Theta\Rho$. 
\end{proof}

%%%%%%%%%%%%%%%%%%%%%%%%%%%%%%%%%%%%%%
\section{Isomorphisms of Pairs of Space Groups}  % 3

An {\it affinity} $\alpha$ of $E^n$ is a map $\alpha: E^n\to E^n$ 
for which there is an element $a\in E^n$ and a matrix $A \in \mathrm{GL}(n,\realnos)$ such that 
$\alpha(x) = a + Ax$ for all $x$ in $E^n$.  We write simply $\alpha = a + A$. 
The set $\mathrm{Aff}(E^n)$ of all affinities of $E^n$ is a group 
that contains $\mathrm{Isom}(E^n)$ as a subgroup.

Let $\Nu_i$ be a complete normal subgroup of an $n$-space group $\Gamma_i$ for $i=1,2$. 
We want to know when $(\Gamma_1,\Nu_1)$ is isomorphic to $(\Gamma_2, \Nu_2)$, 
that is, when there is an isomorphism $\zeta: \Gamma_1 \to \Gamma_2$ such that $\zeta(\Nu_1) = \Nu_2$. 
By a Theorem of Bieberbach an isomorphism $\zeta: \Gamma_1 \to \Gamma_2$ is equal to 
conjugation by an affinity $\phi$ of $E^n$. 
In this section, we determine necessary and sufficient conditions such that there exists 
an affinity $\phi$ of $E^n$ such that $\phi(\Gamma_1,\Nu_1)\phi^{-1}=(\Gamma_2,\Nu_2)$. 

Let $\phi$ be an affinity of $E^n$ such that $\phi(\Gamma_1,\Nu_1)\phi^{-1}=(\Gamma_2,\Nu_2)$. 
Write $\phi = c+C$ with $c\in E^n$ and $C\in \mathrm{GL}(n,\realnos)$. 
Let $V_i = \mathrm{Span}(\Nu_i)$, for $i=1,2$. 
Let $a+I\in \Nu_1$.  Then $\phi(a+I)\phi^{-1} = Ca+I$.  Hence $CV_1 \subseteq V_2$. 
Let $\overline{C}:V_1 \to V_2$ be the linear transformation obtained by restricting $C$. 
Let $\overline{C'}:V_1^\perp \to V_2$ and $C': V_1^\perp \to V_2^\perp$ be the linear transformations 
obtained by restricting $C$ to $V_1^\perp$ followed by the orthogonal projections to $V_2$ and $V_2^\perp$, respectively. 
Write $c = \overline{c} + c'$ with $\ov{c}\in V_2$ and $c'\in V_2^\perp$. 
Let $\overline{\phi}: V_1 \to V_2$ and $\phi': V_1^\perp \to V_2^\perp$ be the affine transformations 
defined by $\overline{\phi} = \overline{c}+\ov{C}$ and $\phi' = c'+C'$. 

\begin{lemma} % 2
Let $\Nu_i$ be a complete normal subgroup of an $n$-space group $\Gamma_i$, with $V_i = \mathrm{Span}(\Nu_i)$, for $i=1,2$. 
Let $\phi = c+C$ be an affinity of $E^n$ such that $\phi(\Gamma_1,\Nu_1)\phi^{-1}=(\Gamma_2,\Nu_2)$. 
Then $\ov{C}$ and $C'$ are invertible, with $\ov{C}^{-1} = \ov{C^{-1}}$ and $(C')^{-1} = (C^{-1})'$,  
and $\ov{(C^{-1})'} = -\ov{C}^{-1}\ov{C'}(C')^{-1}$.  
If $b+B\in \Gamma_1$, then $\ov{C'}B' = \ov C \ov B \ov C^{-1} \ov{C'}$.  
Moreover $\ov{C'}V_1^\perp \subseteq \mathrm{Span}(Z(\Nu_2))$. 
\end{lemma}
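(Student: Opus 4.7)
The plan is to exploit the symmetry $\phi\leftrightarrow\phi^{-1}$ together with careful block decompositions of $C$ and $C^{-1}$ along $E^n=V_i\oplus V_i^\perp$. Applying the argument preceding the lemma (that $CV_1\subseteq V_2$) to $\phi^{-1}=-C^{-1}c+C^{-1}$ yields $C^{-1}V_2\subseteq V_1$, so $CV_1=V_2$ and $\ov{C}\colon V_1\to V_2$ is an isomorphism with $\ov{C^{-1}}=\ov{C}^{-1}$. For $x\in V_1^\perp$, expand $Cx=\ov{C'}x+C'x$ and apply $C^{-1}$; since $C^{-1}$ sends $V_2$ into $V_1$, the image splits into a $V_1$-part $\ov{C}^{-1}\ov{C'}x+\ov{(C^{-1})'}C'x$ and a $V_1^\perp$-part $(C^{-1})'C'x$. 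The identity $C^{-1}Cx=x\in V_1^\perp$ forces the first sum to vanish and the second to equal $x$, giving $(C^{-1})'C'=I_{V_1^\perp}$ and $\ov{(C^{-1})'}C'=-\ov{C}^{-1}\ov{C'}$. The symmetric calculation for $\phi^{-1}$ gives $C'(C^{-1})'=I_{V_2^\perp}$, hence $(C')^{-1}=(C^{-1})'$, and the previous relation rearranges to $\ov{(C^{-1})'}=-\ov{C}^{-1}\ov{C'}(C')^{-1}$.

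For $b+B\in\Gamma_1$, the affinity $\phi(b+B)\phi^{-1}$ lies in $\Gamma_2$, so its linear part $CBC^{-1}$ is in $\mathrm{O}(n)$ and, by Theorem 1(1), preserves $V_2$ and hence also $V_2^\perp$. On $V_2$ one immediately obtains $CBC^{-1}|_{V_2}=\ov{C}\ov{B}\ov{C}^{-1}$. On $V_2^\perp$, take $y\in V_2^\perp$ and compute $CBC^{-1}y$ by decomposing $C^{-1}y=\ov{(C^{-1})'}y+(C^{-1})'y$; since $B$ is orthogonal with $BV_1=V_1$, it also preserves $V_1^\perp$, so $BC^{-1}y$ has $V_1$-part $\ov{B}\ov{(C^{-1})'}y$ and $V_1^\perp$-part $B'(C^{-1})'y$, after which $C$ distributes these cleanly across $V_2\oplus V_2^\perp$. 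Setting the $V_2$-component of $CBC^{-1}y$ to zero, substituting $y=C'x$ with $x\in V_1^\perp$, and simplifying via $\ov{(C^{-1})'}=-\ov{C}^{-1}\ov{C'}(C')^{-1}$, one obtains exactly $\ov{C'}B'=\ov{C}\ov{B}\ov{C}^{-1}\ov{C'}$.

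The final assertion is a direct specialization: for $\nu=a+A\in\Nu_1$ we have $A'=I$ by Theorem 1(2), so the identity becomes $\ov{C'}=\ov{C}\ov{A}\ov{C}^{-1}\ov{C'}$, showing that $\ov{A}$ fixes every vector of $\ov{C}^{-1}\ov{C'}V_1^\perp$. Hence $\ov{C}^{-1}\ov{C'}V_1^\perp\subseteq F_1$, where $F_i\subseteq V_i$ denotes the subspace fixed by the point group $\Pi_i$ of $\Nu_i$. Since $\phi\Nu_1\phi^{-1}=\Nu_2$ and $CAC^{-1}|_{V_2}=\ov{C}\ov{A}\ov{C}^{-1}$, the map $\ov{C}$ conjugates $\Pi_1$ onto $\Pi_2$, so $\ov{C}F_1=F_2$. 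A standard averaging argument identifies $F_2$ with $\mathrm{Span}(Z(\Nu_2))$: any central element of $\Nu_2$ must commute with every translation in $\Nu_2$, forcing its linear part to equal $I$ on $V_2$ and hence (by Theorem 1(2)) on all of $E^n$; a translation $t+I\in\Nu_2$ is central iff $t\in F_2$; and averaging any lattice vector of $\Nu_2$ over $\Pi_2$ yields $|\Pi_2|$ times a lattice vector lying in $F_2$, so these central translations span $F_2$. Combining, $\ov{C'}V_1^\perp\subseteq\ov{C}F_1=F_2=\mathrm{Span}(Z(\Nu_2))$.

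The main obstacle is the $V_2^\perp$-case of the second paragraph, where the two splittings $V_1\oplus V_1^\perp$ and $V_2\oplus V_2^\perp$ appear simultaneously and the bookkeeping is delicate; the calculation closes only because any $B\in\mathrm{O}(n)$ with $BV_1=V_1$ also preserves $V_1^\perp$, so that $B$ acts on the $V_1^\perp$-summand of $C^{-1}y$ exactly as the orthogonal transformation $B'$ already defined.
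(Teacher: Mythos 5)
Your proof is correct and follows essentially the same route as the paper's: invertibility of $\ov{C}$ and $C'$ from block-decomposing the identities $C^{-1}C=I$ and $CC^{-1}=I$, the relation $\ov{C'}B'=\ov C\ov B\ov C^{-1}\ov{C'}$ from the vanishing of the $V_2$-component of $CBC^{-1}$ on $V_2^\perp$, and the final inclusion by specializing to $\Nu_1$ where $B'=I'$. The only divergence is that where the paper cites Lemma 5 of \cite{R-T-Isom} to identify the fixed space of the point group of $\Nu_1$ with $\mathrm{Span}(Z(\Nu_1))$ and then transports via $\phi Z(\Nu_1)\phi^{-1}=Z(\Nu_2)$, you reprove that identification by an averaging argument and transport via $\ov CF_1=F_2$; both are valid.
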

\begin{proof} We have that 
$$\dim V_1 = \dim \Nu_1 = \dim \Nu_2 = \dim V_2.$$
Let $y\in E^n$ and write $y = \ov{y} + y'$ with $\ov{y}\in V_2$ and $y'\in V_2^\perp$. 
Then 
$$\ov{y} = CC^{-1}(\ov{y}) = \ov{C}\ov{C^{-1}}(\ov{y}),$$
and so $\ov{C}$ is invertible with $\ov{C}^{-1} = \ov{C^{-1}}$. 
We have that 
\begin{eqnarray*}
y' & = & CC^{-1}y' \\
& = &C(\ov{(C^{-1})'}(y') + (C^{-1})'(y')) \\
& = &C\ov{(C^{-1})'}(y')+ C(C^{-1})'(y') \\
& = & \ov{C}\ov{(C^{-1})'}(y')+\ov{C'}(C^{-1})'(y')+ C'(C^{-1})'(y'). 
\end{eqnarray*}
Hence $C'$ is invertible, with $(C')^{-1} = (C^{-1})'$, and $\ov{C}\ov{(C^{-1})'}+\ov{C'}(C^{-1})' =0$. 
Therefore $\ov{(C^{-1})'} = -\ov{C}^{-1}\ov{C'}(C')^{-1}$. 

Let $\gamma = b+ B \in \Gamma_1$.  Then $\phi\gamma\phi^{-1} \in \Gamma_2$.  Let $w \in V_2^\perp$. 
Then we have 
\begin{eqnarray*}
w& = & CBC^{-1}w\\
& = &CB(\ov{(C^{-1})'}(w) + (C^{-1})'(w)) \\
& = &C(\ov B\ov{(C^{-1})'}(w)+ B'(C^{-1})'(w)) \\
& = &C\ov B\ov{(C^{-1})'}(w)+ CB'(C^{-1})'(w) \\
& = & \ov{C}\ov B\ov{(C^{-1})'}(w)+\ov{C'}B'(C^{-1})'(w)+ C'B'(C^{-1})'(w). \\
\end{eqnarray*}
As $\phi\gamma\phi^{-1}(w) \in V_2^\perp$, 
we have that 
$$\ov{C}\ov B\ov{(C^{-1})'}+\ov{C'}B'(C^{-1})'=0.$$
Therefore 
$$\ov{C}\ov B(-\ov{C}^{-1}\ov{C'}(C')^{-1})+\ov{C'}B'(C')^{-1}=0.$$
Hence $\ov{C'}B' = \ov C \ov B\ov{C}^{-1}\ov{C'}$. 

Now suppose $\gamma \in \Nu_1$.  Then $B' = I'$ by Theorem 1(2). 
Hence $\ov B\ov{C}^{-1}\ov{C'} = \ov{C}^{-1}\ov{C'}$. 
By Lemma 5 of \cite{R-T-Isom}, we deduce that $\ov{C}^{-1}\ov{C'}V_1^\perp \subseteq \mathrm{Span}(Z(\Nu_1))$. 
As $\phi Z(\Nu_1)\phi^{-1} = Z(\Nu_2)$, we have  
that $\ov{C'}V_1^\perp \subseteq \mathrm{Span}(Z(\Nu_2))$. 
\end{proof}

\begin{theorem}  % 3
Let $\Nu_i$ be  a complete normal subgroup of an $n$-space group $\Gamma_i$, with $V_i = \mathrm{Span}(\Nu_i)$ for $i=1,2$. 
Let $\Xi_i:\Gamma_i/\Nu_i \to \mathrm{Isom}(V_i/\Nu_i)$ be the homomorphism induced 
by the action of $\Gamma_i/\Nu_i$ on $V_i/\Nu_i$ for $i=1,2$.  
Let $\alpha: V_1 \to V_2$, and $\beta: V_1^\perp \to V_2^\perp$ be affinities such that $\alpha\ov\Nu_1\alpha^{-1} = \ov\Nu_2$ 
and $\beta\Gamma'_1\beta^{-1} = \Gamma_2'$. 
Let $\alpha_\star: V_1/\Nu_1 \to V_2/\Nu_2$ be the affinity defined by $\alpha_\star(\Nu_1v) = \Nu_2\alpha(v)$, 
and let $\alpha_\sharp: \mathrm{Aff}(V_1/\Nu_1) \to \mathrm{Aff}(V_2/\Nu_2)$ be the isomorphism 
defined by $\alpha_\sharp(\chi) = \alpha_\star\chi\alpha_\star^{-1}$.  
Let $\beta_\ast: \Gamma_1' \to \Gamma_2'$ be the isomorphism defined by $\beta_\ast(\gamma') = \beta\gamma'\beta^{-1}$. 

Write $\alpha = \ov c + \ov C$ with $\ov c \in V_2$ and $\ov C: V_1 \to V_2$ a linear isomorphism. 
Let $D: V_1^\perp \to \mathrm{Span}(Z(\Nu_2))$ be a linear transformation 
such that if $b+B\in \Gamma_1$, then $DB' = \ov C\ov B\ov C^{-1} D$. 
Let $\Rho_i:\Gamma_i/\Nu_i\to \Gamma_i'$ be the isomorphism 
defined by $\Rho_i(\Nu_i\gamma) = \gamma'$ for each $i = 1,2$, 
and let $p_1:\Gamma_1/\Nu_1 \to V_1^\perp$ be defined by $p_1(\Nu_1(b+B)) = b'$. 
Let $\mathcal{K}_i$ be the connected component of the identity of the Lie group $\mathrm{Isom}(V_i/\Nu_i)$ for each $i = 1,2$. 
Note that $\mathcal{K}_i = \{(v+\ov I)_\star: v \in \mathrm{Span}(Z(\Nu_i))\}$ by Theorem 1 of \cite{R-T-Isom}. 
Then the following are equivalent:

\begin{enumerate}
\item There exists an affinity $\phi = c + C$ of $E^n$ such that $\phi(\Gamma_1,\Nu_1)\phi^{-1} = (\Gamma_2,\Nu_2)$, 
with $\ov\phi = \alpha$, and $\phi' = \beta$, and $\ov{C'} = D$. 
\item We have that  
$$\Xi_2\Rho_2^{-1}\beta_\ast\Rho_1 = (Dp_1)_\star\alpha_\sharp\Xi_1$$
with $(Dp_1)_\star:\Gamma_1/\Nu_1 \to \mathcal{K}_2$ a crossed homomorphism defined by
$$(Dp_1)_\star(\Nu_1(b+B)) = (Db'+\ov I)_\star$$
and $\Gamma_1/\Nu_1$ acting on $\mathcal{K}_2$ by $\Nu_1(b+B)(v+\ov I)_\star =  (\ov C\ov B\ov C^{-1}v+\ov I)_\star$ 
for each $b+B \in \Gamma_1$ and $v \in  \mathrm{Span}(Z(\Nu_2))$. 
\item We have that
$$\alpha_\sharp^{-1}\Xi_2\Rho_2^{-1} \beta_\ast\Rho_1 = (\ov C^{-1}Dp_1)_\star\Xi_1$$
with $(\ov C^{-1}Dp_1)_\star: \Gamma_1/\Nu_1 \to \mathcal{K}_1$ a crossed homomorphism 
defined by 
$$(\ov C^{-1}Dp_1)_\star(\Nu_1(b+B)) = (\ov C^{-1}D(b')+\ov I)_\star$$
and $\Gamma_1/\Nu_1$ acting on $\mathcal{K}_1$ by $\Nu_1(b+B)(v + \ov I)_\star = (\ov B v+\ov I)_\star$ for each $b+B \in \Gamma_1$ 
and $v \in \mathrm{Span}(Z(\Nu_1))$. 
\end{enumerate}
\end{theorem}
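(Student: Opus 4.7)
The plan is to prove the equivalences as the chain (1) $\Leftrightarrow$ (2) $\Leftrightarrow$ (3); the first equivalence carries the content, while the second is a formal conjugation by $\alpha_\sharp^{-1}$. For (1) $\Rightarrow$ (2), suppose $\phi = c+C$ exists. By Lemma 2, $C$ is upper triangular with respect to $E^n = V_1\oplus V_1^\perp = V_2\oplus V_2^\perp$, namely
\[
C = \begin{pmatrix} \ov C & \ov{C'} \\ 0 & C' \end{pmatrix},
\]
and for $\gamma = b+B\in\Gamma_1$ the matrix $B$ is block diagonal with blocks $\ov B$ and $B'$ by Theorem 1. I would compute $\phi\gamma\phi^{-1} = (c+Cb-CBC^{-1}c) + CBC^{-1}$ block by block. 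The relation $\ov{C'}B' = \ov C\ov B\ov C^{-1}\ov{C'}$ from Lemma 2 forces $CBC^{-1}$ to be block diagonal with blocks $\ov C\ov B\ov C^{-1}$ and $C'B'(C')^{-1}$, and the same cancellation yields
\[
\ov{\phi\gamma\phi^{-1}} = t_{\ov{C'}b'} \circ (\alpha\ov\gamma\alpha^{-1}), \qquad (\phi\gamma\phi^{-1})' = \beta_\ast(\gamma'),
\]
where $t_v$ is translation by $v$ in $V_2$. Passing to the induced action on $V_2/\Nu_2$, the first identity becomes $\Xi_2(\Nu_2\phi\gamma\phi^{-1}) = (\ov{C'}b'+\ov I)_\star\,\alpha_\sharp\Xi_1(\Nu_1\gamma)$, while the second gives $\Nu_2\phi\gamma\phi^{-1} = \Rho_2^{-1}\beta_\ast\Rho_1(\Nu_1\gamma)$. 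Substituting $D = \ov{C'}$ and $p_1(\Nu_1\gamma) = b'$ then produces (2). The intertwining relation for $D$ is exactly what makes $(Dp_1)_\star$ a crossed homomorphism into $\mathcal{K}_2$ for the declared action of $\Gamma_1/\Nu_1$, which I would verify directly from the cocycle identity.

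For (2) $\Rightarrow$ (1), I would reverse the recipe. Let $c'$ and $C'$ be the translation and linear parts of $\beta$, and set
\[
C = \begin{pmatrix} \ov C & D \\ 0 & C' \end{pmatrix}, \qquad c = \ov c + c', \qquad \phi = c+C,
\]
so that $\ov\phi = \alpha$, $\phi' = \beta$, and $\ov{C'} = D$ by construction. I would first check $\phi\Nu_1\phi^{-1} = \Nu_2$: for $\nu = a+A\in\Nu_1$, one has $a\in V_1$ and $A|_{V_1^\perp} = I$, so the off-diagonal block $D$ plays no role, and $\alpha\ov\Nu_1\alpha^{-1} = \ov\Nu_2$ together with the completeness of $\Nu_2$ finishes the job. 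For a general $\gamma\in\Gamma_1$, the same block calculation as in the previous paragraph shows $\phi\gamma\phi^{-1}$ normalizes $\Nu_2$, has primed part $\beta_\ast(\gamma')\in\Gamma_2'$, and induces on $V_2/\Nu_2$ the map $\Xi_2\Rho_2^{-1}\beta_\ast\Rho_1(\Nu_1\gamma)$ by (2). Hence $\phi\Gamma_1\phi^{-1}$ contains $\Nu_2$ as a complete normal subgroup, has primed part $\Gamma_2'$, and induces the same $\Xi$ on $V_2/\Nu_2$ as $\Gamma_2$ does; the uniqueness clause of Theorem 2 then forces $\phi\Gamma_1\phi^{-1} = \Gamma_2$.

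Finally, for (2) $\Leftrightarrow$ (3), a direct calculation gives $\alpha^{-1}\circ(v+\ov I)\circ\alpha = \ov C^{-1}v + \ov I$ for $v\in\mathrm{Span}(Z(\Nu_2))$, so $\alpha_\sharp^{-1}(v+\ov I)_\star = (\ov C^{-1}v+\ov I)_\star$. Applying $\alpha_\sharp^{-1}$ to both sides of (2) converts $(Dp_1)_\star$ into $(\ov C^{-1}Dp_1)_\star$ and, via the substitution $u = \ov C^{-1}v$, transports the action $(v+\ov I)_\star\mapsto(\ov C\ov B\ov C^{-1}v+\ov I)_\star$ on $\mathcal{K}_2$ to the action $(u+\ov I)_\star\mapsto(\ov Bu+\ov I)_\star$ on $\mathcal{K}_1$, which is the content of (3). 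The main obstacle in the whole proof is the forward implication (1) $\Rightarrow$ (2): tracking carefully how the off-diagonal block $\ov{C'}$ of $C$ interacts with the block-diagonal $B\in\Gamma_1$ to inject the extra translation $\ov{C'}b'$ into $\ov{\phi\gamma\phi^{-1}}$, and recognizing this correction as the crossed-homomorphism term $(Dp_1)_\star(\Nu_1\gamma)$ that separates cleanly from $\alpha_\sharp\Xi_1(\Nu_1\gamma)$, rather than being absorbed into $\alpha\ov\gamma\alpha^{-1}$.
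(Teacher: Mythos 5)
Your outline of (1) $\Rightarrow$ (2) and of (2) $\Leftrightarrow$ (3) matches the paper's argument: the block decomposition of $C$, the identity $\ov{\phi\gamma\phi^{-1}} = (\ov{C'}b'+\ov I)\alpha\ov\gamma\alpha^{-1}$, and the transport of the $\Gamma_1/\Nu_1$-action under $\alpha_\sharp^{-1}$ are exactly the computations in the paper. The gap is in (2) $\Rightarrow$ (1). You build $\phi$ from $\alpha$, $D$, $\beta$ and then pass directly to ``$\phi\Gamma_1\phi^{-1}$ contains $\Nu_2$ as a complete normal subgroup \ldots\ the uniqueness clause of Theorem 2 then forces $\phi\Gamma_1\phi^{-1}=\Gamma_2$.'' But the uniqueness statement of Theorem 2 is about $n$-space groups, that is, discrete cocompact groups of \emph{isometries}, and nothing in your argument shows that $\phi\gamma\phi^{-1}$ is an isometry of $E^n$ for a general $\gamma = b+B\in\Gamma_1$. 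A priori it is only an affinity: the hypothesis $\alpha\ov\Nu_1\alpha^{-1}=\ov\Nu_2$ controls conjugates of elements of $\Nu_1$ only, and $\beta\Gamma_1'\beta^{-1}=\Gamma_2'$ controls only the block $C'B'(C')^{-1}$; the block $\ov C\ov B\ov C^{-1}$ need not be orthogonal without a further argument.

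This is precisely where the paper uses hypothesis (2) a second time: equation (2) gives $(\alpha\ov\gamma\alpha^{-1})_\star = (Db'+\ov I)_\star^{-1}\,\Xi_2\Rho_2^{-1}\beta_\ast\Rho_1(\Nu_1\gamma)$, a composition of isometries of $V_2/\Nu_2$, and Lemmas 1 and 7 of \cite{R-T-Isom} are then invoked to conclude that $\alpha\ov\gamma\alpha^{-1}$ is an isometry of $V_2$, so that $\ov C\ov B\ov C^{-1}$ is orthogonal and hence $CBC^{-1}=\ov C\ov B\ov C^{-1}\times C'B'(C')^{-1}$ is orthogonal on $E^n=V_2\times V_2^\perp$. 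Only after that does the paper verify that $\phi\Gamma_1\phi^{-1}$ is discrete and cocompact (easy, since $\phi$ is a homeomorphism), that $\phi\Nu_1\phi^{-1}=\Nu_2$ is a complete normal subgroup via Theorem 5 of \cite{R-T}, and finally apply the uniqueness in Theorem 2. You need to supply this isometry step; without it the appeal to Theorem 2 is not available, and it is the one place where the quotient-level identity (2) feeds back into the geometry of $E^n$ rather than merely matching induced maps on $V_2/\Nu_2$.
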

\begin{proof}
Suppose there exists an affinity $\phi = c+ C$ of $E^n$ such that $\phi(\Gamma_1,\Nu_1)\phi^{-1} = (\Gamma_2,\Nu_2)$, 
with $\ov\phi = \alpha$, and $\phi' = \beta$, and $\ov{C'} = D$. 
Let $\gamma  = b+ B \in \Gamma_1$.  
Then we have 
$$\phi\gamma\phi^{-1} = (c+C)(b+B)(c+C)^{-1} = Cb + (I-CBC^{-1})c+CBC^{-1}.$$
Hence we have 
\begin{eqnarray*}
\ov{\phi\gamma\phi^{-1}} & = & \ov{C}\ov b + \ov{C'}b'+ (\ov I-\ov C\ov B{\ov C}^{-1})\ov c+\ov C\ov B{\ov C}^{-1} \\
& = & ( \ov{C'}b'+\ov I)(\ov c +{\ov C})(\ov b+\ov B)(\ov c + {\ov C})^{-1} \\
& = & (Db'+\ov I)\ov \phi\ov \gamma\ov \phi^{-1} \ \
 = \ \ ( Db'+\ov I)\alpha\ov \gamma\alpha^{-1},  
\end{eqnarray*}
and 
\begin{eqnarray*}
(\phi\gamma\phi^{-1})' & = & C'b'+ (I'-C'B'(C')^{-1})c' + C'B'(C')^{-1} \\
& = & (c'+C')(b'+B')(c'+C')^{-1} \\ 
& = & \phi' \gamma' (\phi')^{-1} \ \ = \ \ \beta \gamma' \beta^{-1}.
\end{eqnarray*}
Observe that 

\begin{eqnarray*} \Xi_2\Rho_2^{-1}\beta_\ast\Rho_1(\Nu_1\gamma) 
& = & \Xi_2\Rho_2^{-1}\beta_\ast(\gamma') \\
& = & \Xi_2\Rho_2^{-1}(\beta\gamma'\beta^{-1}) \\
& = & \Xi_2\Rho_2^{-1}(\phi\gamma\phi^{-1})' \\
& = & \Xi_2(\Nu_2\phi\gamma\phi^{-1})  \\
& =  & (\ov{\phi\gamma\phi^{-1}})_\star \\
& =  & ((Db'+\ov I)\alpha\ov \gamma\alpha^{-1})_\star \\
& =  & (Db'+\ov I)_\star\alpha_\star\ov \gamma_\star \alpha_\star^{-1}  \\
& =  & (Dp_1(\Nu_1(b+B)))_\star \alpha_\sharp(\ov \gamma_\star) \ \,
 =  \ \, (Dp_1(\Nu_1\gamma))_\star \alpha_\sharp(\Xi_1(\Nu_1\gamma)). 
\end{eqnarray*}
Hence we have that $\Xi_2\Rho_2^{-1}\beta_\ast\Rho_1 = (Dp_1)_\star\alpha_\sharp\Xi_1$. 

Conversely, suppose that $\Xi_2\Rho_2^{-1}\beta_\ast\Rho_1 = (Dp_1)_\star\alpha_\sharp\Xi_1$. 
Define an affine transformation $\phi: E^n \to E^n$ by 
$$\phi(x) = \alpha(\overline x) + D(x')+ \beta(x')$$
for each $x\in E^n$, where $x = \ov x + x'$ with $\ov x \in V_1$ and $x'\in V_1^\perp$. 
Then $\phi$ is an affinity of $E^n$, with
$$\phi^{-1}(y) = \alpha^{-1}(\ov y) -\ov C^{-1}D\beta^{-1}(y') + \beta^{-1}(y')$$
for each $y\in E^n$ where $y = \ov y + y'$ with $\ov y \in V_2$  and $y'\in V_2^\perp$. 

Write $\beta = c' + C'$ with $c' \in V_2^\perp$ and $C': V_1^\perp \to V_2^\perp$ a linear isomorphism. 
Write $\phi = c+ C$ with $c \in E^n$ and $C$ a linear isomorphism of $E^n$. 
Then $c =\ov c + c'$ and $Cx = \ov C\ov x + Dx' + C'x'$ for each $x\in E^n$ with $\ov x \in V_1$ and $x'\in V_1^\perp$. 
We have that $\ov \phi = \alpha$, and $\phi' = \beta$ and $\ov{C'} = D$. 
We also have $\phi^{-1} = -C^{-1}c+C^{-1}$ and 
$$C^{-1}y = \ov C^{-1}\ov y - \ov C^{-1}D(C')^{-1}y' + (C')^{-1}y'$$
for all $y \in E^n$ with $\ov y \in V_2$ and $y' \in V_2^\perp$. 

Let $\gamma \in \Gamma_1$.  Write $\gamma = b+ B$ with $b\in E^n$ and $B\in \mathrm{O}(n)$. 
Then we have 
$$ \phi\gamma\phi^{-1} = (c+C)(b+B)(c+C)^{-1} = Cb+(I-CBC^{-1})c + CBC^{-1}.$$
Let $y \in E^n$.  Write $y = \ov y + y'$ with $\ov y \in V_2$ and $y' \in V_2^\perp$. 
Then we have that 
\begin{eqnarray*} CBC^{-1} 
& = & CB(\ov C^{-1}\ov y - \ov C^{-1}D(C')^{-1}y' + (C')^{-1}y'  \\
& = & C(\ov B\ov C^{-1}\ov y -\ov B\ov C^{-1}D(C')^{-1}y' +B'(C')^{-1}y' \\
& = & \ov C\ov B\ov C^{-1}\ov y -\ov C\ov B\ov C^{-1}D(C')^{-1}y' +DB'(C')^{-1}y' + C'B'(C')^{-1}y' \\
& = & \ov C\ov B\ov C^{-1}\ov y + C'B'(C')^{-1}y'.
\end{eqnarray*}
Hence $CBC^{-1} =  \ov C\ov B\ov C^{-1} \times C'B'(C')^{-1}$ as a linear isomorphism 
of $E^n = V_2\times V_2^\perp$. 

Moreover, we have 
\begin{eqnarray*}
\ov{\phi\gamma\phi^{-1}} & = & \ov{C}\ov b + \ov{C'}b'+ (\ov I-\ov C\ov B{\ov C}^{-1})\ov c+\ov C\ov B{\ov C}^{-1} \\
& = & ( \ov{C'}b'+\ov I)(\ov c +{\ov C})(\ov b+\ov B)(\ov c + {\ov C})^{-1} \\
& = & (Db'+\ov I)\ov \phi\ov \gamma\ov \phi^{-1} \ \
 = \ \ ( Db'+\ov I)\alpha\ov \gamma\alpha^{-1},  
\end{eqnarray*}
and 
\begin{eqnarray*}
(\phi\gamma\phi^{-1})' & = & C'b'+ (I'-C'B'(C')^{-1})c' + C'B'(C')^{-1} \\
& = & (c'+C')(b'+B')(c'+C')^{-1} \\ 
& = & \phi' \gamma' (\phi')^{-1} \ \ = \ \ \beta \gamma' \beta^{-1}.
\end{eqnarray*}

As $\Xi_2\Rho_2^{-1}\beta_\ast\Rho_1 = (Dp_1)_\star\alpha_\sharp\Xi_1$, 
we have that $(\alpha\ov\gamma\alpha^{-1})_\star$ is an isometry of $V_2/\Nu_2$. 
By Lemmas 1 and 7 of \cite{R-T-Isom}, we have that $\alpha\ov\gamma\alpha^{-1}$ is an isometry of $V_2$. 
Hence $\ov C \ov B \ov C^{-1}$ is an orthogonal transformation of $V_2$. 

As $\beta\Gamma_1'\beta^{-1} = \Gamma_2'$, we have that $C'B'(C')^{-1}$  is an orthogonal transformation of $V_2^\perp$.
Hence 
$CBC^{-1} = \ov C\ov B\ov C^{-1} \times C'B'(C')^{-1}$
is an orthogonal transformation of $E^n = V_2 \times V_2^\perp$.
Therefore $\phi\gamma\phi^{-1}$ is an isometry of $E^n$ for each $\gamma \in \Gamma_1$. 

As $\Gamma_1$ acts discontinuously on $E^n$ and $\phi$ is a homeomorphism of $E^n$, 
we have that $\phi\Gamma_1\phi^{-1}$ acts discontinuously on $E^n$. 
Therefore $\phi\Gamma_1\phi^{-1}$ is a discrete subgroup of $\mathrm{Isom}(E^n)$ by Theorem 5.3.5 of \cite{R}. 
Now $\phi$ induces a homeomorphism 
$\phi_\star: E^n/\Gamma_1 \to E^n/\phi\Gamma_1\phi^{-1}$
defined by $\phi_\star(\Gamma_1 x) = \phi\Gamma_1\phi^{-1}\phi(x)$. 
Hence $E^n/\phi\Gamma_1\phi^{-1}$ is compact. 
Therefore $\phi\Gamma_1\phi^{-1}$ is a $n$-space group. 

Now $\phi_\ast: \Gamma_1 \to \phi\Gamma_1\phi^{-1}$, defined by $\phi_\ast(\gamma) = \phi\gamma\phi^{-1}$,  
is an isomorphism that maps the normal subgroup $\Nu_1$ to the normal subgroup 
$\phi\Nu_1\phi^{-1}$ of $\phi\Gamma_1\phi^{-1}$, 
and $\phi\Gamma_1\phi^{-1}/\phi\Nu_1\phi^{-1}$ is isomorphic to $\Gamma_1/\Nu_1$. 
Hence $\phi\Gamma_1\phi^{-1}/\phi\Nu_1\phi^{-1}$ is a space group. 
Therefore $\phi\Nu_1\phi^{-1}$ is a complete normal subgroup of $\phi\Gamma_1\phi^{-1}$ by Theorem 5 of \cite{R-T}. 

Now suppose $\nu = a+A \in \Nu_1$.  Then $a' = 0$ and $A' = I'$, and so $\nu' = I'$.  
Hence $\ov{\phi\nu\phi^{-1}} = \alpha\ov \nu\alpha^{-1}$ and $(\phi\nu\phi^{-1})' = I'$. 
As $\alpha\ov\Nu_1\alpha^{-1} = \ov\Nu_2$, we have that  $\phi\Nu_1\phi^{-1} = \Nu_2$.  
Moreover, as $\beta\Gamma_1'\beta^{-1}=\Gamma_2'$, we have that $(\phi\Gamma_1\phi^{-1})' = \Gamma_2'$. 

Let $\Xi: \phi\Gamma_1\phi^{-1}/\Nu_2 \to \mathrm{Isom}(V_2/\Nu_2)$ be the homomorphism induced by the action 
of $\phi\Gamma_1\phi^{-1}/\Nu_2$ on $V_2/\Nu_2$.  Let $\gamma =b+B \in \Gamma_1$, 
and let $\Rho:\phi\Gamma_1\phi^{-1}/\Nu_2 \to \Gamma_2'$ 
be the isomorphism defined by $\Rho(\Nu_2\phi\gamma\phi^{-1}) = (\phi\gamma\phi^{-1})'$. 
Then we have that 
\begin{eqnarray*} \Xi\Rho^{-1}(\beta\gamma'\beta^{-1})
& = & \Xi\Rho^{-1}((\phi\gamma\phi^{-1})') \\
& = & \Xi(\Nu_2\phi\gamma\phi^{-1}) \\
& = & (\ov{\phi\gamma\phi^{-1}})_\star \\
& = & ((Db'+\ov{I})\alpha\ov{\gamma}{\alpha}^{-1})_\star \\
& = & (Db'+\ov{I})_\star\alpha_\star\ov{\gamma}_\star{\alpha}^{-1}_\star \\
& = & (Dp_1(\Nu_1(b+B)))_\star\alpha_\sharp(\ov\gamma_\star) \\
& = & (Dp_1(\Nu_1\gamma))_\star\alpha_\sharp(\Xi_1(\Nu_1\gamma)) \\
& = & \Xi_2\Rho_2^{-1}\beta_\ast\Rho_1(\Nu_1\gamma) \ \
= \ \ \Xi_2\Rho_2^{-1}(\beta\gamma'\beta^{-1}). 
\end{eqnarray*} 
Hence we have that $\Xi\Rho^{-1} = \Xi_2\Rho_2^{-1}$.  
Therefore $\phi\Gamma_1\phi^{-1} = \Gamma_2$ by Theorem 2. 
Thus $\phi(\Gamma_1,\Nu_1)\phi^{-1} = (\Gamma_2,\Nu_2)$. 

Let $\gamma = b+B$ and $\gamma_1 = b_1+B_1$ be elements of  $\Gamma_1$.  
Then we have that 
\begin{eqnarray*} (Dp_1)_\star(\Nu_1\gamma\Nu_1\gamma_1) 
& = &(Dp_1)_\star(\Nu_1(b+Bb_1+BB_1))  \\
& = & (D(b+Bb_1)'+\ov I)_\star  \\
& = &(D(b'+B'b_1')+\ov I)_\star \\
& = &(Db'+DB'b_1'+\ov I)_\star\\
& = & (Db'+\ov C \ov B \ov C^{-1}Db_1'+\ov I)_\star \\
& = &(Db'+\ov I)_\star (\ov C \ov B \ov C^{-1}Db_1'+\ov I)_\star \\
& = & (Db'+\ov I)_\star (\Nu_1(b+B))(Db_1'+\ov I)_\star\\
& = & (Dp_1)_\star(\Nu_1\gamma) ((\Nu_1\gamma)(Dp_1)_\star(\Nu_1\gamma_1)). 
\end{eqnarray*} 
Therefore $(Dp_1)_\star : \Gamma_1/\Nu_1 \to \mathcal{K}_2$ is a crossed homomorphism. 
Thus statements (1) and (2) are equivalent. 

The equation $\Xi_2\Rho_2^{-1}\beta_\ast\Rho_1 = (Dp_1)_\star\alpha_\sharp\Xi_1$ 
is equivalent to the equation 
$$\alpha_\sharp^{-1} \Xi_2\Rho_2^{-1}\beta_\ast\Rho_1 = \alpha_\sharp^{-1} (Dp_1)_\star\alpha_\sharp\Xi_1.$$
Observe that 
\begin{eqnarray*} \alpha_\sharp^{-1} (Dp_1)_\star\alpha_\sharp\Xi_1(\Nu_1\gamma) 
& = & \alpha_\sharp^{-1}( (Dp_1)_\star (\Nu_1\gamma) \alpha_\sharp \Xi_1(\Nu_1\gamma)) \\
& = & \alpha_\sharp^{-1}( (Db'+ \ov I)_\star \alpha_\sharp \ov \gamma_\star) \\
& = & \alpha_\sharp^{-1}( (Db'+ \ov I)_\star \alpha_\star \ov \gamma_\star \alpha_\star^{-1}) \\
& = & \alpha_\star^{-1}(Db'+ \ov I)_\star \alpha_\star \ov \gamma_\star \alpha_\star^{-1} \alpha_\star \\
& = & (\alpha^{-1}(Db' + \ov I)\alpha)_\star\ov\gamma_\star \\
& = & (\ov C^{-1}Db'+\ov I)_\star \Xi_1(\Nu_1\gamma) \\
& = & (\ov C^{-1}Dp_1)_\star(\Nu_1\gamma)\Xi_1(\Nu_1\gamma).
\end{eqnarray*}
Hence we have that $\alpha_\sharp^{-1} (Dp_1)_\star\alpha_\sharp\Xi_1 = (\ov C^{-1}Dp_1)_\star\Xi_1$. 
By the same argument as with $(Dp_1)_\star : \Gamma_1/\Nu_1 \to \mathcal{K}_2$, 
we have that $(\ov C^{-1}Dp_1)_\star : \Gamma_1/\Nu_1 \to \mathcal{K}_1$ is a crossed homomorphism. 
Thus (2) and (3) are equivalent. 
\end{proof}

%%%%%%%%%%%%%%%
\section{Outer Automorphism Groups of Space Groups}  % 4

Through this section, let $m$ be a positive integer less than $n$. 
Let $\Mu$ be an $m$-space group and 
let $\Delta$ be an $(n-m)$-space group.  

\vspace{.15in}
\noindent{\bf Definition:} Define $\mathrm{Iso}(\Delta,\Mu)$ to be the set of isomorphism classes 
of pairs $(\Gamma, \Nu)$ 
where $\Nu$ is a complete normal subgroup of an $n$-space group $\Gamma$ 
such that $\Nu$ is isomorphic to $\Mu$ and $\Gamma/\Nu$ is isomorphic to $\Delta$. 
We denote the isomorphism class of a pair $(\Gamma,\Nu)$ by $[\Gamma,\Nu]$. 

\vspace{.15in}
Let $\Nu$ be a complete normal subgroup of an $n$-space group $\Gamma$,  
and let $\mathrm{Out}_E(\Nu)$ be the Euclidean outer automorphism group of $\Nu$ 
defined in \S 4 of \cite{R-T-Isom}. 
The group $\mathrm{Out}_E(\Nu)$ is finite by Theorem 2 of \cite{R-T-Isom}. 
The action of $\Gamma$ on $\Nu$ by conjugation induces a homomorphism 
$$\mathcal{O}: \Gamma/\Nu \to \mathrm{Out}_E(\Nu)$$
defined by $\mathcal{O}(\Nu\gamma) = \gamma_\ast\mathrm{Inn}(\Nu)$ where $\gamma_\ast(\nu) = \gamma\nu\gamma^{-1}$ 
for each $\gamma \in \Gamma$ and $\nu\in\Nu$. 
Let  $\alpha: \Nu_1 \to \Nu_2$ be an isomorphism.  Then $\alpha$ induces an isomorphism
$$\alpha_\#: \mathrm{Out}(\Nu_1) \to \mathrm{Out}(\Nu_2)$$
defined by $\alpha_\#(\zeta\mathrm{Inn}(\Nu_1) )= \alpha\zeta\alpha^{-1}\mathrm{Inn}(\Nu_2)$ for each $\zeta\in\mathrm{Aut}(\Nu_1)$. 

\begin{lemma}  % 17
Let $\Nu_i$ be a complete normal subgroup of an $n$-space group $\Gamma_i$ for $i = 1,2$.  
Let $\mathcal{O}_i:\Gamma_i/\Nu_i \to \mathrm{Out}_E(\Nu_i)$ be the homomorphism 
induced by the action of $\Gamma_i$ on $\Nu_i$ by conjugation for $i = 1, 2$, and  
let $\alpha:\Nu_1\to \Nu_2$ and $\phi: \Gamma_1\to \Gamma_2$ and $\beta:\Gamma_1/\Nu_1 \to \Gamma_2/\Nu_2$ be isomorphisms  
such that the following diagram commutes
\[\begin{array}{ccccccccc}
1 & \to & \Nu_1 & \rightarrow & \Gamma_1 & \rightarrow & \Gamma_1/\Nu_1 & \to & 1 \\
    &       & \hspace{.12in} \downarrow \, \alpha &    & \hspace{.12in}\downarrow\, \phi  & & \hspace{.12in} \downarrow\, \beta & \\
1 & \to  &  \Nu_2 & \rightarrow  & \Gamma_2 & \rightarrow & \Gamma_2/\Nu_2 & \to & 1,  
\end{array}\] 
where the horizontal maps are inclusions and projections, then $\mathcal{O}_2 = \alpha_\#\mathcal{O}_1\beta^{-1}$. 
\end{lemma}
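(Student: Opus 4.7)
The plan is to unwind both sides of the identity $\mathcal{O}_2 = \alpha_\#\mathcal{O}_1\beta^{-1}$ on an arbitrary coset and reduce everything to the single fact that $\phi$ restricts to $\alpha$ on $\Nu_1$, which is built into commutativity of the left square of the diagram.

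First I would pick an arbitrary element $\Nu_2\gamma_2 \in \Gamma_2/\Nu_2$. Since $\phi:\Gamma_1\to\Gamma_2$ is an isomorphism, I can write $\gamma_2 = \phi(\gamma_1)$ for a (unique) $\gamma_1\in\Gamma_1$. Commutativity of the right square then gives $\beta(\Nu_1\gamma_1) = \Nu_2\phi(\gamma_1) = \Nu_2\gamma_2$, so $\beta^{-1}(\Nu_2\gamma_2) = \Nu_1\gamma_1$. Applying the definitions yields
\[
\alpha_\#\mathcal{O}_1\beta^{-1}(\Nu_2\gamma_2) \;=\; \alpha_\#\bigl((\gamma_1)_\ast\mathrm{Inn}(\Nu_1)\bigr) \;=\; \alpha(\gamma_1)_\ast\alpha^{-1}\mathrm{Inn}(\Nu_2),
\]
while
\[
\mathcal{O}_2(\Nu_2\gamma_2) \;=\; \phi(\gamma_1)_\ast\mathrm{Inn}(\Nu_2).
\]
So the theorem reduces to checking the equality of automorphisms $\alpha(\gamma_1)_\ast\alpha^{-1} = \phi(\gamma_1)_\ast$ of $\Nu_2$ (not merely modulo $\mathrm{Inn}(\Nu_2)$).

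For that, I evaluate at an arbitrary $\nu_2\in\Nu_2$. Commutativity of the left square tells me $\phi|_{\Nu_1} = \alpha$, so for $\nu_1\in\Nu_1$ we have $\phi(\nu_1)=\alpha(\nu_1)$ and in particular $\phi(\alpha^{-1}(\nu_2)) = \nu_2$. Then
\[
\alpha(\gamma_1)_\ast\alpha^{-1}(\nu_2) \;=\; \alpha\bigl(\gamma_1\alpha^{-1}(\nu_2)\gamma_1^{-1}\bigr) \;=\; \phi(\gamma_1)\,\phi(\alpha^{-1}(\nu_2))\,\phi(\gamma_1)^{-1} \;=\; \phi(\gamma_1)\,\nu_2\,\phi(\gamma_1)^{-1} \;=\; \phi(\gamma_1)_\ast(\nu_2),
\]
using that $\phi$ is a homomorphism. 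Hence the two automorphisms agree on all of $\Nu_2$, and the two cosets in $\mathrm{Out}_E(\Nu_2)$ coincide.

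There is no real obstacle here; the statement is essentially a functoriality remark. The only thing to be careful about is to use the full commutativity of the diagram (both squares): the right square identifies $\beta^{-1}(\Nu_2\gamma_2)$ with $\Nu_1\gamma_1$, and the left square supplies $\phi|_{\Nu_1}=\alpha$, which is exactly what turns the conjugation $\alpha(\gamma_1)_\ast\alpha^{-1}$ into $\phi(\gamma_1)_\ast$.
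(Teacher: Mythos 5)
Your proposal is correct and follows essentially the same route as the paper: evaluate both sides on a coset $\Nu_2\phi(\gamma_1)$, reduce to the identity of automorphisms $\alpha(\gamma_1)_\ast\alpha^{-1}=\phi(\gamma_1)_\ast$ of $\Nu_2$, and verify it pointwise using $\phi|_{\Nu_1}=\alpha$ from the left square. No substantive differences.
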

\begin{proof}  
Let $\gamma\in \Gamma_1$.  Then we have that 
$$\mathcal{O}_2(\Nu_2\phi(\gamma)) = \phi(\gamma)_\ast\mathrm{Inn}(\Nu_2),$$
whereas
\begin{eqnarray*}
\alpha_\#\mathcal{O}_1\beta^{-1}(\Nu_2\phi(\gamma)) & =  & \alpha_\#\mathcal{O}_1(\Nu_1\gamma) \\
                                              & = & \alpha_\#(\gamma_\ast\mathrm{Inn}(\Nu_1)) \ \ = \ \ \alpha\gamma_\ast\alpha^{-1}\mathrm{Inn}(\Nu_2).
\end{eqnarray*}
If $\nu \in \Nu$,  then 
\begin{eqnarray*}
\alpha\gamma_\ast\alpha^{-1}(\nu) & = & \alpha\gamma_\ast\alpha^{-1}(\nu) \\
						       & = & \alpha(\gamma\alpha^{-1}(\nu)\gamma^{-1}) \\
						       & = & \phi(\gamma\phi^{-1}(\nu)\gamma^{-1}) \\
						       & = & \phi(\gamma)\nu\phi(\gamma)^{-1} \ \ = \ \ \phi(\gamma)_\ast(\nu). 
\end{eqnarray*}
Hence $\alpha\gamma_\ast\alpha^{-1} = \phi(\gamma)_\ast$.  
Therefore $\mathcal{O}_2 = \alpha_\#\mathcal{O}_1\beta^{-1}$. 
\end{proof}

\noindent{\bf Definition:} 
Define $\mathrm{Hom}_f(\Delta,\mathrm{Out}(\Mu))$ to be the set of all homomorphisms from $\Delta$ to $\mathrm{Out}(\Mu)$ 
that have finite image. 

\vspace{.15in}
The group $\mathrm{Out}(\Mu)$ acts on the left of $\mathrm{Hom}_f(\Delta,\mathrm{Out}(\Mu))$ by conjugation, 
that is, if $g\in \mathrm{Out}(\Mu)$ and $\eta\in \mathrm{Hom}_f(\Delta,\mathrm{Out}(\Mu))$, 
then $g \eta = g_\ast\eta$ where $g_\ast: \mathrm{Out}(\Mu) \to \mathrm{Out}(\Mu)$ is defined by $g_\ast(h) = ghg^{-1}$. 
Let $\mathrm{Out}(\Mu)\backslash\mathrm{Hom}_f(\Delta,\mathrm{Out}(\Mu))$ be the set of $\mathrm{Out}(\Mu)$-orbits. 
The group $\mathrm{Aut}(\Delta)$ acts on the right of $\mathrm{Hom}_f(\Delta,\mathrm{Out}(\Mu))$ 
by composition of homomorphisms. 
If $\beta\in \mathrm{Aut}(\Delta)$ and  $\eta\in \mathrm{Hom}_f(\Delta,\mathrm{Out}(\Mu))$ and $g\in \mathrm{Out}(\Mu)$, 
then 
$$(g\eta)\beta = (g_\ast\eta)\beta = g_\ast(\eta\beta) = g(\eta\beta).$$
Hence $\mathrm{Aut}(\Delta)$ acts on the right of 
$\mathrm{Out}(\Mu)\backslash\mathrm{Hom}_f(\Delta,\mathrm{Out}(\Mu))$ 
by 
$$(\mathrm{Out}(\Mu)\eta)\beta = \mathrm{Out}(\Mu)(\eta\beta).$$
Let $\delta, \epsilon \in \Delta$ and $\eta\in \mathrm{Hom}_f(\Delta,\mathrm{Out}(\Mu))$.  
Then we have that
$$\eta\delta_\ast(\epsilon) = \eta(\delta\epsilon\delta^{-1}) =\eta(\delta)\eta(\epsilon)\eta(\delta)^{-1}= \eta(\delta)_\ast\eta(\epsilon) 
= (\eta(\delta)\eta)(\epsilon). $$
Hence $\eta\delta_\ast = \eta(\delta)\eta$.  Therefore $\mathrm{Inn}(\Delta)$ acts trivially on
 $\mathrm{Out}(\Mu)\backslash\mathrm{Hom}_f(\Delta,\mathrm{Out}(\Mu))$. 
Hence $\mathrm{Out}(\Delta)$ acts on the right of $\mathrm{Out}(\Mu)\backslash\mathrm{Hom}_f(\Delta,\mathrm{Out}(\Mu))$ 
by
$$(\mathrm{Out}(\Mu)\eta)(\beta\mathrm{Inn}(\Delta)) = \mathrm{Out}(\Mu)(\eta\beta).$$
\noindent{\bf Definition:} Define the set $\mathrm{Out}(\Delta,\Mu)$ by the formula
$$\mathrm{Out}(\Delta,\Mu) = (\mathrm{Out}(\Mu)\backslash\mathrm{Hom}_f(\Delta,\mathrm{Out}(\Mu)))/\mathrm{Out}(\Delta).$$ 
If $\eta\in \mathrm{Hom}_f(\Delta,\mathrm{Out}(\Mu))$, let $[\eta] = (\mathrm{Out}(\Mu)\eta)\mathrm{Out}(\Delta)$ 
be the element of $\mathrm{Out}(\Delta,\Mu)$ determined by $\eta$. 

\vspace{.15in}
Let $(\Gamma,\Nu)$ be a pair such that $[\Gamma,\Nu]\in\mathrm{Iso}(\Delta,\Mu)$. 
Let  $\mathcal{O}: \Gamma/\Nu \to \mathrm{Out}_E(\Nu)$ 
be the homomorphism induced by the action of $\Gamma$ on $\Nu$ by conjugation. 
Let $\alpha: \Nu \to \Mu$ and $\beta:\Delta \to \Gamma/\Nu$ be isomorphisms. 
Then $\alpha_\#\mathcal{O}\beta \in \mathrm{Hom}_f(\Delta,\mathrm{Out}(\Mu))$. 

Let $\alpha':\Nu\to\Mu$ and $\beta':\Delta\to\Gamma/\Nu$ are isomorphisms. 
Observe that 
\begin{eqnarray*}
\alpha'_\#\mathcal{O}\beta' & = & \alpha'_\#\alpha_\#^{-1}\alpha_\#\mathcal{O}\beta\beta^{-1}\beta'  \\
& = & (\alpha'\alpha^{-1})_\#\alpha_\#\mathcal{O}\beta(\beta^{-1}\beta')  \\
& = & (\alpha'\alpha^{-1}\mathrm{Inn}(\Mu))_\ast(\alpha_\#\mathcal{O}\beta(\beta^{-1}\beta'))  \\
& = & (\alpha'\alpha^{-1}\mathrm{Inn}(\Mu))(\alpha_\#\mathcal{O}\beta)(\beta^{-1}\beta').  \\
\end{eqnarray*}
Hence $[\alpha_\#\mathcal{O}\beta]$ in $\mathrm{Out}(\Delta,\Mu)$ does not depend 
on the choice of $\alpha$ and $\beta$,  
and so $(\Gamma,\Nu)$ determines the element $[\alpha_\#\mathcal{O}\beta]$ 
of $\mathrm{Out}(\Delta,\Mu)$ independent of the choice of $\alpha$ and $\beta$. 

Suppose $[\Gamma_i,\Nu_i]\in \mathrm{Iso}(\Delta,\Mu)$ for $i=1,2$, 
and $\phi:(\Gamma_1,\Nu_1) \to (\Gamma_2,\Nu_2)$ is an isomorphism of pairs. 
Let $\alpha:\Nu_1\to \Nu_2$ be the isomorphism obtained by restricting $\phi$, 
and let $\beta: \Gamma_1/\Nu_1\to \Gamma_2/\Nu_2$ be the isomorphism induced by $\phi$. 
Let $\mathcal{O}_i:\Gamma_i/\Nu_i \to \mathrm{Out}_E(\Nu_i)$ be the homomorphism 
induced by the action of $\Gamma_i$ on $\Nu_i$ by conjugation for $i=1,2$. 
Then $\mathcal{O}_2 = \alpha_\#\mathcal{O}_1\beta^{-1}$ by Lemma 17.  
Let $\alpha_1:\Nu_1 \to \Mu$ and $\beta_1:\Delta \to \Gamma_1/\Nu_1$ be isomorphisms. 
Let $\alpha_2 = \alpha_1\alpha^{-1}$ and $\beta_2= \beta\beta_1$. 
Then we have 
$$(\alpha_2)_\#\mathcal{O}_2\beta_2 
 =  (\alpha_1\alpha^{-1})_\#\alpha_\#\mathcal{O}_1\beta^{-1}\beta_2 \\
 = (\alpha_1)_\#\mathcal{O}_1\beta_1. $$
Hence $(\Gamma_1,\Nu_1)$ and $(\Gamma_2,\Nu_2)$ determine the same element of $\mathrm{Out}(\Delta,\Mu)$. 
Therefore there is a function 
$$\omega:\mathrm{Iso}(\Delta,\Mu) \to \mathrm{Out}(\Delta,\Mu)$$
defined by $\omega([\Gamma,\Nu]) = [\alpha_\#\mathcal{O}\beta]$ for any choice of isomorphisms $\alpha:\Nu\to\Mu$ 
and $\beta: \Delta\to \Gamma/\Nu$. 

\begin{lemma} % 4
The set $\mathrm{Out}(\Delta,\Mu)$ is finite. 
\end{lemma}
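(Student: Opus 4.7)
The plan is to reduce the finiteness of $\mathrm{Out}(\Delta,\Mu)$ to three standard facts: (i) Baues--Grunewald says $\mathrm{Out}(\Mu)$ is arithmetic; (ii) an arithmetic group has only finitely many conjugacy classes of finite subgroups; and (iii) $\Delta$ is finitely generated, so has only finitely many homomorphisms into any given finite group. Since passing from $\mathrm{Out}(\Mu)\backslash \mathrm{Hom}_f(\Delta,\mathrm{Out}(\Mu))$ to its $\mathrm{Out}(\Delta)$-quotient can only collapse orbits, it suffices to prove $\mathrm{Out}(\Mu)\backslash \mathrm{Hom}_f(\Delta,\mathrm{Out}(\Mu))$ is finite.

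First, I would invoke the theorem of Baues--Grunewald \cite{B-G} to conclude that $\mathrm{Out}(\Mu)$ is an arithmetic group. Then, I would appeal to the classical theorem (essentially a consequence of Selberg's lemma together with reduction theory for arithmetic groups) that an arithmetic group has only finitely many conjugacy classes of finite subgroups. Let $H_1,\ldots,H_s$ be a complete list of representatives of these conjugacy classes in $\mathrm{Out}(\Mu)$.

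Next, given any $\eta\in\mathrm{Hom}_f(\Delta,\mathrm{Out}(\Mu))$, its image $\eta(\Delta)$ is a finite subgroup of $\mathrm{Out}(\Mu)$, so there exists $g\in\mathrm{Out}(\Mu)$ and an index $i$ with $g\eta(\Delta)g^{-1}\subseteq H_i$. Then $g_\ast\eta = g\eta$ is an element of the $\mathrm{Out}(\Mu)$-orbit of $\eta$ whose image lies entirely in $H_i$. Therefore every $\mathrm{Out}(\Mu)$-orbit in $\mathrm{Hom}_f(\Delta,\mathrm{Out}(\Mu))$ meets
$$\bigcup_{i=1}^s \mathrm{Hom}(\Delta,H_i).$$
Since $\Delta$ is an $(n-m)$-space group, it is finitely generated; fixing a finite generating set of size $k$, the set $\mathrm{Hom}(\Delta,H_i)$ injects into $H_i^k$ and so is finite. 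Hence the displayed union is finite, which bounds the number of orbits and yields the desired finiteness, and the further quotient by $\mathrm{Out}(\Delta)$ is then automatically finite.

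The main obstacle is step two, the appeal to finiteness of conjugacy classes of finite subgroups in an arithmetic group; this is a nontrivial structural result about arithmetic groups rather than something one can verify directly from the definitions, and it is the essential place where the Baues--Grunewald theorem is used. The rest of the argument is bookkeeping: the orbit reduction in the third paragraph and the elementary counting of homomorphisms into a finite group.
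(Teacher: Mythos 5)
Your proof is correct and follows essentially the same route as the paper: invoke Baues--Grunewald to get arithmeticity of $\mathrm{Out}(\Mu)$, use finiteness of conjugacy classes of finite subgroups (the paper cites \S 5 of Borel's article for this), and count homomorphisms from the finitely generated group $\Delta$ into the finitely many finite subgroups up to conjugacy. Your write-up merely spells out the orbit-representative bookkeeping that the paper leaves implicit.
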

\begin{proof}
The group $\mathrm{Out}(\Mu)$ is arithmetic by Theorem 1.1 of \cite{B-G}. 
Hence $\mathrm{Out}(\Mu)$ has only finitely many conjugacy classes of finite subgroups, cf.\,\S 5 of \cite{Borel}. 
As $\Delta$ is finitely generated there are only finitely many homomorphisms from $\Delta$ to a finite group $G$. 
Therefore $\mathrm{Out}(\Mu)\backslash\mathrm{Hom}_f(\Delta,\mathrm{Out}(\Mu))$ is finite. 
Hence $\mathrm{Out}(\Delta,\Mu) $ is finite. 
\end{proof}

%%%%%%%%%%%%%%%%

\section{Fiber Cohomology Classes}  % 5

Consider $\omega:\mathrm{Iso}(\Delta,\Mu) \to \mathrm{Out}(\Delta,\Mu)$.  
Suppose $[\Gamma_1,\Nu_1]$ and $[\Gamma_2,\Nu_2]$ are in the same fiber of $\omega$. 
We want to define a class $[\Gamma_1,\Nu_1;\Gamma_2,\Nu_2;\alpha,\beta]$ in the cohomology group $H^1(\Gamma_1/\Nu_1, \mathcal{K}_1)$ 
where $\Gamma_1/\Nu_1$ acts on $\mathcal{K}_1$ by $\Nu_1(b+B)(v+\ov I)_\star = (\ov B v + \ov I)_\star$. 

Let $\alpha_i: \Nu_i \to \Mu$ and $\beta_i: \Delta \to \Gamma_i/\Nu_i$ be isomorphisms for $i = 1,2$. 
Let $\mathcal{O}_i:\Gamma_i/\Nu_i \to \mathrm{Out}_E(\Nu_i)$ be the homomorphism 
induced by the action of $\Gamma_i$ on $\Nu_i$ by conjugation for $i=1,2$. 
As $\omega([\Gamma_1,\Nu_1]) = \omega([\Gamma_2,\Nu_2])$, we have that 
$[(\alpha_1)_\#\mathcal{O}_1\beta_1]=[(\alpha_2)_\#\mathcal{O}_2\beta_2]$. 
Then there exists $\alpha_0$ in $\mathrm{Aut}(\Mu)$ and $\beta_0$ in $\mathrm{Aut}(\Delta)$ such that 
$(\alpha_1)_\#\mathcal{O}_1\beta_1 = (\alpha_0)_\#(\alpha_2)_\#\mathcal{O}_2\beta_2\beta_0$. 
We have that 
$$\mathcal{O}_1 = (\alpha_1^{-1}\alpha_0\alpha_2)_\#\mathcal{O}_2\beta_2\beta_0\beta_1^{-1}.$$ 

Let $\alpha: \Nu_1 \to \Nu_2$ be the isomorphism $\alpha_2^{-1}\alpha_0^{-1}\alpha_1$, 
and let $\beta: \Gamma_1/\Nu_1 \to \Gamma_2/\Nu_2$ be the isomorphism  $\beta_2\beta_0\beta_1^{-1}$. 
Then $\mathcal{O}_1 = \alpha^{-1}_\#\mathcal{O}_2\beta$.
Now $\alpha$ induces an isomorphism $\ov\alpha: \ov\Nu_1 \to \ov\Nu_2$ defined by $\ov\alpha(\ov\nu) = \ov{\alpha(\nu)}$ for each $\nu$ in $\Nu_1$. 
Let $V_i = \mathrm{Span}(\Nu_i)$ for $i = 1, 2$, 
and let $\tilde\alpha: V_1 \to V_2$ be an affinity such that $\tilde\alpha\ov\Nu_1\tilde\alpha^{-1} = \ov\Nu_2$ and $\tilde\alpha_\ast = \ov\alpha$, 
that is, $\tilde\alpha \ov\nu \tilde\alpha^{-1} = \ov\alpha(\ov\nu)$ for each $\nu$ in $\Nu_1$. 

Let $\Xi_i: \Gamma_i/\Nu_i \to \mathrm{Isom}(V_i/\Nu_i)$ be the homomorphism induced by the action of $\Gamma_i/\Nu_i$ on $V_i/\Nu_i$ 
for $i = 1, 2$.  
Let $\Omega_i : \mathrm{Isom}(V_i/\Nu_i) \to \mathrm{Out}_E(\Nu_i)$ be defined so that $\Omega_i(\zeta) = \mathrm{Inn}(\Nu_i)\hat\zeta_\ast$ 
where $\hat\zeta$ is an isometry of $V_i$ that lifts $\zeta$ and $\hat\zeta_\ast$ is the automorphism of $\Nu_i$ defined 
by $\ov{\hat\zeta_\ast(\nu)} = \hat\zeta\ov \nu \hat\zeta^{-1}$ for $i = 1, 2$.  
Then we have that $\Omega_i\Xi_i = \mathcal{O}_i$ for $i = 1, 2$. 
By Lemma 10 of \cite{R-T-Isom}, we have that 
$$\Omega_1\Xi_1 = \alpha_\#^{-1}\Omega_2\Xi_2\beta = 
\Omega_1(\tilde\alpha_\sharp^{-1}\Xi_2\beta).$$ 

Let $\phi: \Gamma_1/\Nu_1 \to \mathrm{Aff}(V_1/\Nu_1)$ and $\psi: \Gamma_1/\Nu_1 \to \mathrm{Aff}(V_1/\Nu_1)$ 
be the homomorphisms defined by $\phi = \tilde\alpha_\sharp^{-1}\Xi_2\beta$ and $\psi = \Xi_1$. 
Then we have that $\phi(g)\psi(g)^{-1}$ is in $\mathcal{K}_1$ for each $g$ in $\Gamma_1/\Nu_1$ by Theorem 3 of \cite{R-T-Isom}. 
As $\psi$ takes values in $\mathrm{Isom}(V_1/\Nu_1)$ and $\mathcal{K}_1$ is a subgroup of $\mathrm{Isom}(V_1/\Nu_1)$, 
we have that $\phi$ takes values in $\mathrm{Isom}(V_1/\Nu_1)$. 

Let $g, h$ be in $\Gamma_1/\Nu_1$, then we have that 
\begin{eqnarray*} 
\phi(gh)\psi(gh)^{-1} & = & \phi(g)\phi(h)(\psi(g)\psi(h))^{-1} \\
& = & \phi(g)\phi(h)\psi(h)^{-1}\psi(g)^{-1} \\
& = & \phi(g)\psi(g)^{-1}\psi(g)\phi(h)\psi(h)^{-1}\psi(g)^{-1} \\
& = & (\phi(g)\psi(g)^{-1})\psi(g)_\ast(\phi(h)\psi(h)^{-1})
\end{eqnarray*}
with 
$$\psi(\Nu_1(b+B))_\ast(v+\ov I) = (\ov b+ \ov B)_\star (v+ \ov I)_\star (\ov b + \ov B)_\star^{-1} = (\ov Bv + \ov I)_\star.$$
Hence the function $\phi\psi^{-1}: \Gamma_1/\Nu_1 \to \mathcal{K}_1$ is a crossed homomorphism, 
and so determines a class $[\Gamma_1,\Nu_1; \Gamma_2,\Nu_2;\alpha,\beta]$ in $H^1(\Gamma_1/\Nu_1, \mathcal{K}_1)$, 
cf.\,p.\,105 of \cite{M}. 

Let $[\Gamma,\Nu]$ be a class in $\mathrm{Isom}(\Delta, \Mu)$, 
and let $V = \mathrm{Span}(\Nu)$. 
Let $\mathcal{C}$ be the centralizer of $\ov\Nu$ in $\mathrm{Aff}(V)$. 
By Lemmas 6 and 8 of \cite{R-T-Isom}, we have that 
$$\mathcal{C}=\{v+\ov I: v \in \mathrm{Span}(Z(\Nu))\}.$$ 
The group $\Gamma/\Nu$ acts on $\mathcal{C}$ by $(\Nu(b+B))(v+\ov I) = \ov Bv + \ov I$
and $\Gamma/\Nu$ acts on $Z(\Nu)$ by $(\Nu(b+B))(u+ I) = Bu +  I$. 
We have a short exact sequence of $(\Gamma/\Nu)$-modules
$$0 \to Z(\Nu)\ {\buildrel \iota \over \longrightarrow}\ \mathcal{C}\ {\buildrel \kappa \over \longrightarrow}\ \mathcal{K} \to 0$$
where $\iota(u+I ) = u+\ov I$ and $\kappa(v+ \ov I ) = (v+\ov I)_\star$. 

Let $\Tau$ be the group of translations of $\Gamma$.  
Then $\Tau\Nu/\Nu$ is a normal subgroup of $\Gamma/\Nu$ of finite index 
and $\Tau\Nu/\Nu$ is a subgroup of the group of translations of $\Gamma/\Nu$ of finite index by Theorem 16 of \cite{R-T}. 
The group $\Gamma/\Nu$ acts on the abelian group $\Tau\Nu/\Nu$ by  $(\Nu(b+B))(a+ I) = \Nu(Ba +  I)$, 
and so $\Tau\Nu/\Nu$ is a $(\Gamma/\Nu)$-module. 
Moreover the group $\Tau\Nu/\Nu$ acts trivially on $\mathcal{C}$. 

\begin{lemma} % 5
Let $f: \Gamma/\Nu \to \mathcal{C}$ be a crossed homomorphism, 
and let $f_{res}: \Tau\Nu/\Nu \to \mathcal{C}$ be the restriction of $f$. 
Then $f_{res}$ is a homomorphism of $(\Gamma/\Nu)$-modules 
and the class of $f$ in $H^1(\Gamma/\Nu, \mathcal{C})$ is completely determined by $f_{res}$. 
\end{lemma}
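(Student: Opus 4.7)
The plan is to handle the two assertions of the lemma separately: the first by a direct cocycle computation, and the second by the Lyndon--Hochschild--Serre spectral sequence applied to the extension $1 \to \Tau\Nu/\Nu \to \Gamma/\Nu \to Q \to 1$, where $Q = (\Gamma/\Nu)/(\Tau\Nu/\Nu)$.

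For the first assertion, since $\Tau\Nu/\Nu$ acts trivially on $\mathcal{C}$, the crossed-homomorphism identity $f(n_1 n_2) = f(n_1) + n_1\cdot f(n_2)$ collapses to $f(n_1 n_2) = f(n_1) + f(n_2)$ for $n_1, n_2 \in \Tau\Nu/\Nu$, so $f_{res}$ is an ordinary homomorphism. To verify $(\Gamma/\Nu)$-equivariance, fix $g \in \Gamma/\Nu$ and $n \in \Tau\Nu/\Nu$. Expanding $f(gng^{-1})$ twice via the cocycle identity and using $f(g^{-1}) = -g^{-1}f(g)$ (which follows from $f(1)=0$) gives
$$f(gng^{-1}) = f(g) + g\cdot f(n) - (gng^{-1})\cdot f(g).$$
Since $\Tau\Nu/\Nu$ is normal in $\Gamma/\Nu$ and acts trivially on $\mathcal{C}$, the element $gng^{-1}$ fixes $f(g)$, yielding $f_{res}(gng^{-1}) = g\cdot f_{res}(n)$, which is exactly $(\Gamma/\Nu)$-equivariance.

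For the second assertion, I would invoke the five-term exact sequence
$$0 \to H^1(Q,\mathcal{C}) \to H^1(\Gamma/\Nu,\mathcal{C}) \to H^1(\Tau\Nu/\Nu,\mathcal{C})^Q \to H^2(Q,\mathcal{C}),$$
valid because $\mathcal{C}^{\Tau\Nu/\Nu} = \mathcal{C}$. The quotient $Q$ is finite by Theorem 16 of \cite{R-T}, and $\mathcal{C}$, identified with the real vector space $\mathrm{Span}(Z(\Nu))$ via $v+\ov I \leftrightarrow v$, is uniquely divisible; hence $H^n(Q,\mathcal{C}) = 0$ for $n \geq 1$. Therefore the restriction map $H^1(\Gamma/\Nu,\mathcal{C}) \to H^1(\Tau\Nu/\Nu,\mathcal{C})^Q$ is injective. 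Because $\Tau\Nu/\Nu$ acts trivially on $\mathcal{C}$, all $1$-coboundaries vanish and $H^1(\Tau\Nu/\Nu,\mathcal{C}) = \mathrm{Hom}(\Tau\Nu/\Nu,\mathcal{C})$, so the restricted class is literally $f_{res}$; injectivity then gives that $[f]\in H^1(\Gamma/\Nu,\mathcal{C})$ is determined by $f_{res}$.

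The main substantive step is the vanishing $H^n(Q,\mathcal{C}) = 0$ for $n \geq 1$, which rests on two ingredients: the finiteness of $Q$ (from Theorem 16 of \cite{R-T}) and the unique divisibility of $\mathcal{C}$ (inherited from its real vector space structure). Once these are identified, the Lyndon--Hochschild--Serre five-term sequence does all the work, and the equivariance calculation in the first part is routine modulo bookkeeping of the conjugation action.
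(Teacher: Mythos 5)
Your proof is correct and follows essentially the same route as the paper: both rest on the inflation--restriction exact sequence for $1 \to \Tau\Nu/\Nu \to \Gamma/\Nu \to \Gamma/\Tau\Nu \to 1$ together with the vanishing of $H^i(\Gamma/\Tau\Nu,\mathcal{C})$ for $i=1,2$, which holds because $\Gamma/\Tau\Nu$ is finite and $\mathcal{C}$ is a divisible torsion-free abelian group. The only cosmetic difference is that you verify the $(\Gamma/\Nu)$-equivariance of $f_{res}$ by a direct cocycle computation, whereas the paper extracts it from the identification $H^1(\Tau\Nu/\Nu,\mathcal{C})^{\Gamma/\Nu} = \mathrm{Hom}_{\Gamma/\Nu}(\Tau\Nu/\Nu,\mathcal{C})$ via the Universal Coefficients Theorem; also note that the finiteness of $\Gamma/\Tau\Nu$ follows simply from the finiteness of the point group $\Gamma/\Tau$ rather than from Theorem 16 of \cite{R-T}.
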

\begin{proof}
According to \cite{M}, p.\,354, we have an exact sequence of homomorphisms
$$H^1(\Gamma/\Tau\Nu,\mathcal{C})\ {\buildrel inf \over \longrightarrow}\ H^1(\Gamma/\Nu,\mathcal{C})
\ {\buildrel res \over \longrightarrow}\ H^1(\Tau\Nu/\Nu,\mathcal{C})^{\Gamma/\Nu} \to H^2(\Gamma/\Tau\Nu,\mathcal{C}).$$
The group $\Gamma/\Tau\Nu$ is finite and $\mathcal{C}$ is a torsion-free, divisible, abelian group, 
and so $H^i(\Gamma/\Tau\Nu,\mathcal{C})= 0$ for $i =1, 2$ by Corollary IV.5.4 of \cite{M}.  
Hence 
$$res: H^1(\Gamma/\Nu,\mathcal{C}) \to H^1(\Tau\Nu/\Nu,\mathcal{C})^{\Gamma/\Nu}$$ 
is an isomorphism. 
Here $res([f]) = [f_{res}]$. 
By the Universal Coefficients Theorem (p.\,77 of \cite{M}), 
we have that 
$$H^1(\Tau\Nu/\Nu,\mathcal{C})^{\Gamma/\Nu} = \mathrm{Hom}(\Tau\Nu/\Nu,\mathcal{C})^{\Gamma/\Nu}.$$
Here $\Gamma/\Nu$ acts on $\mathrm{Hom}(\Tau\Nu/\Nu,\mathcal{C})$ by $((\Nu\gamma)h)(x) = (\Nu\gamma) h(\Nu\gamma^{-1}x)$ for each $\gamma\in\Gamma$, 
homomorphism $h: \Tau\Nu/\Nu \to \mathcal{C}$, and element $x \in \Tau\Nu/\Nu$. 
Therefore, we have that 
$$\mathrm{Hom}(\Tau\Nu/\Nu,\mathcal{C})^{\Gamma/\Nu} = \mathrm{Hom}_{\Gamma/\Nu}(\Tau\Nu/\Nu,\mathcal{C}).$$
Hence $[f_{res}] =\{f_{res}\}$ and $f_{res}$ is a homomorphism of $(\Gamma/\Nu)$-modules. 
Therefore the class of $f$ in $H^1(\Gamma/\Nu, \mathcal{C})$ is completely determined by $f_{res}$. 
\end{proof}

\begin{lemma} % 6
Suppose that $\omega([\Gamma_1,\Nu_1]) = \omega([\Gamma_2,\Nu_2])$ with $\mathcal{O}_1 = \alpha_\#^{-1}\mathcal{O}_2\beta$. 
If the class $[\Gamma_1,\Nu_1; \Gamma_2,\Nu_2;\alpha,\beta]$ is in the image of $\kappa_\ast: H^1(\Gamma_1/\Nu_1,\mathcal{C}_1) \to H^1(\Gamma_1/\Nu_1,\mathcal{K}_1)$, 
then $[\Gamma_1,\Nu_1] = [\Gamma_2,\Nu_2]$. 
\end{lemma}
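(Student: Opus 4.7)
\medskip
\textbf{Proof plan.}
The plan is to reduce the lemma to Theorem 2 by producing affinities $\tilde\alpha: V_1 \to V_2$ and $\tilde\beta: V_1^\perp \to V_2^\perp$ together with a linear map $D: V_1^\perp \to \mathrm{Span}(Z(\Nu_2))$ that satisfy condition (3) of that theorem. The affinity $\tilde\alpha$ lifting $\ov\alpha$ is already given in the construction of the class. I would first apply Bieberbach rigidity to the isomorphism $\Rho_2\beta\Rho_1^{-1}: \Gamma_1' \to \Gamma_2'$ of space groups on $V_i^\perp$ to obtain an affinity $\tilde\beta$ realizing it by conjugation, so that $\tilde\beta_\ast = \Rho_2\beta\Rho_1^{-1}$. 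With these in place, the content of the argument is the construction of $D$ and a final adjustment of $\tilde\alpha$ by an element of $\mathcal{C}_1$.

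Set $\phi = \tilde\alpha_\sharp^{-1}\Xi_2\beta$ and $\psi = \Xi_1$. By hypothesis there is a crossed homomorphism $F: \Gamma_1/\Nu_1 \to \mathcal{C}_1$ with $[\kappa F] = [\phi\psi^{-1}]$ in $H^1(\Gamma_1/\Nu_1, \mathcal{K}_1)$. Writing $F(g) = f(g) + \ov I$ and applying Lemma 4, the restriction $f|_{\Tau\Nu_1/\Nu_1}$ is a $(\Gamma_1/\Nu_1)$-module homomorphism into $\mathrm{Span}(Z(\Nu_1))$. Since $\Tau\Nu_1/\Nu_1$ embeds as a full-rank lattice in $V_1^\perp$ via $\Nu_1(a+I) \mapsto a'$, this homomorphism extends uniquely by $\realnos$-linearity to a linear map $\tilde f: V_1^\perp \to \mathrm{Span}(Z(\Nu_1))$. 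Writing $\tilde\alpha = \ov c + \ov C$, I define $D := \ov C\tilde f: V_1^\perp \to \mathrm{Span}(Z(\Nu_2))$. The required intertwining relation $DB' = \ov C\ov B\ov C^{-1}D$ for $b+B \in \Gamma_1$ reduces to $\tilde fB' = \ov B\tilde f$, which holds on the lattice by the $(\Gamma_1/\Nu_1)$-equivariance of $f|_{\Tau\Nu_1/\Nu_1}$ and extends to all of $V_1^\perp$ by $\realnos$-linearity.

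Next, I define a crossed homomorphism $\tilde F: \Gamma_1/\Nu_1 \to \mathcal{C}_1$ by $\tilde F(\Nu_1(b+B)) = \tilde f(b') + \ov I$; the intertwining relation just verified makes $\tilde F$ a crossed homomorphism. Since $\tilde F$ and $F$ agree on $\Tau\Nu_1/\Nu_1$, Lemma 4 yields $[\tilde F] = [F]$ in $H^1(\Gamma_1/\Nu_1, \mathcal{C}_1)$. Applying $\kappa_\ast$, the crossed homomorphism $(\ov C^{-1}Dp_1)_\star = \kappa\tilde F$ differs from $\phi\psi^{-1}$ by a coboundary $\delta k$ for some $k \in \mathcal{K}_1$. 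Lifting $k$ to $c \in \mathcal{C}_1$ via the surjectivity of $\kappa$ and replacing $\tilde\alpha$ by $\tilde\alpha \circ c$ adds precisely $\delta\kappa(c) = \delta k$ to $\phi\psi^{-1}$, by a direct computation analogous to those in the proof of Theorem 2, while preserving $\ov C$, the induced isomorphism $\tilde\alpha_\ast = \ov\alpha$, and $D$. After this modification $\phi\psi^{-1} = (\ov C^{-1}Dp_1)_\star$ holds on the nose, so condition (3) of Theorem 2 is verified. Theorem 2 then supplies an affinity $\phi$ of $E^n$ with $\phi(\Gamma_1, \Nu_1)\phi^{-1} = (\Gamma_2, \Nu_2)$, giving $[\Gamma_1, \Nu_1] = [\Gamma_2, \Nu_2]$ in $\mathrm{Iso}(\Delta, \Mu)$.

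The main obstacle is building $D$ so that $(\ov C^{-1}Dp_1)_\star$ represents the correct cohomology class in $Z^1(\Gamma_1/\Nu_1, \mathcal{K}_1)$, not merely a class on $\Tau\Nu_1/\Nu_1$. Lemma 4 is the crucial tool here: it identifies the cohomology class of a crossed homomorphism into $\mathcal{C}_1$ with the single datum of a $(\Gamma/\Nu)$-module homomorphism on $\Tau\Nu_1/\Nu_1$—exactly the data that can be linearly extended to $V_1^\perp$ and packaged as $D$. The final coboundary-absorption step is straightforward because $\kappa:\mathcal{C}_1 \to \mathcal{K}_1$ is surjective.
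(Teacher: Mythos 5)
Your proposal is correct and follows essentially the same route as the paper: use the restriction lemma to replace the given crossed homomorphism into $\mathcal{C}_1$ by one of the form $\Nu_1(b+B)\mapsto L(b')+\ov I$ with $L:V_1^\perp\to\mathrm{Span}(Z(\Nu_1))$ linear and equivariant, set $D=\ov CL$, absorb the remaining principal crossed homomorphism by composing $\tilde\alpha$ with a central translation $v+\ov I$, realize $\beta$ on the $V^\perp$ side by an affinity via Bieberbach rigidity, and invoke the equivalence theorem (Theorem 3 of the paper; your ``Theorem 2'') to produce the conjugating affinity of $E^n$. The only discrepancies are the off-by-one internal references, which do not affect the argument.
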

\begin{proof}
Suppose that $[\Gamma_1,\Nu_1; \Gamma_2,\Nu_2;\alpha,\beta]$ is in the image of 
$\kappa_\ast: H^1(\Gamma_1/\Nu_1,\mathcal{C}_1) \to H^1(\Gamma_1/\Nu_1,\mathcal{K}_1)$. 
Then there is a crossed homomorphism $f: \Gamma_1/\Nu_1 \to \mathcal{C}_1$ such that 
$$\kappa_\ast([f]) = [\Gamma_1,\Nu_1; \Gamma_2,\Nu_2;\alpha,\beta].$$
By Lemma 5, the cohomology class $[f]$ is completely determined by the restriction $(\Gamma_1/\Nu_1)$-module homomorphism 
$f_{res}: \Tau_1\Nu_1/\Nu_1 \to C_1$.  

To simplify notation, replace $\Gamma_1/\Nu_1$ with $\Gamma_1'$.  
Then $\Tau_1\Nu_1/\Nu_1$ corresponds to $T_1' = \{b'+I': b+I \in T_1\}$.   
Moreover $\Gamma_1/\Nu_1$ acts on $T_1'$ by $(\Nu_1(b+B))(b'+I') = B'b'+I'$ 
and $f_{res}$ corresponds to a homomorphism of $(\Gamma_1/\Nu_1)$-modules $f_{res}': T_1' \to \mathcal{C}_1$. 

The group $\Tau_1\Nu_1/\Nu_1$ has finite index in the group of translations of $\Gamma_1/\Nu_1$, 
and so $T_1'$ has finite index in the group of translations of $\Gamma_1'$. 
Hence $\{b': b+I \in T_1\}$ contains a basis of the vector space $V_1^\perp$. 
Therefore $f_{res}': T_1' \to C_1$ induces a linear transformation $L: V_1^\perp \to \mathrm{Span}(Z(\Nu_1))$ 
such that $f_{res}'(b'+I') = L(b')+\ov I$ for each $b+I$ in $T_1$ and if $b+ B$ is in $\Gamma_1$, then $LB' = \ov BL$. 

Consider the function $h: \Gamma_1/\Nu_1 \to \mathcal{C}_1$ defined by $h(\Nu_1(b+ B)) =L(b')+\ov I$. 
Then $h$ is a crossed homomorphism. 
If $b+I$ is in $T_1$, then 
$$h_{res}(\Nu_1(b+I)) = L(b')+\ov I = f_{res}'(b'+I') = f_{res}(\Nu_1(b+I)).$$
Hence $h_{res} = f_{res}$. 
Therefore $[h] = [f]$ in $H^1(\Gamma_1/\Nu_1, \mathcal{C}_1)$ by Lemma 5. 

Let $\tilde\alpha: V_1 \to V_2$ be the affinity defined above and write $\tilde\alpha = \ov c + \ov C$ 
with $\ov c \in V_2$ and $\ov C: V_1 \to V_2$ a linear isomorphism. 
Define a linear transformation $D: V_1^\perp \to \mathrm{Span}(\Nu_2)$ by $D = \ov C L$. 
If $b+B \in \Gamma_1$, then $DB' = \ov C\ov B\ov C^{-1} D$. 
Let $p_1: \Gamma_1/\Nu_1 \to V_1^\perp$ be the crossed homomorphism defined by $p_1(\Nu_1(b+B)) = b'$. 
Then $h(\Nu_1\gamma) = \ov C^{-1}D p_1(\Nu_1\gamma)+ \ov I$. 
Observe that $\kappa_\ast([h]) = [h_\star]$ where $h_\star$ is defined 
by $h_\star(\Nu_1\gamma) = (h(\Nu_1\gamma))_\star$. 
Thus $h_\star = (\ov C^{-1}D p_1)_\star$ as defined in Theorem 3(3). 

Let $v$ be in $\mathrm{Span}(Z(\Nu_1))$, and let $f_v: \Gamma_1/\Nu_1 \to \mathcal{K}_1$ be the principal crossed homomorphism 
determined by $(v+ I)_\star$. 
Then we have that
\begin{eqnarray*}
f_v(\Nu_1(b+B)) & =  & (\Nu_1(b+B))(v+\ov I)_\star (v+\ov I)_\star^{-1} \\
& = & (\ov Bv + \ov I)_\star(-v+\ov I)_\star\ \ =\ \ (\ov Bv - v +\ov I)_\star.
\end{eqnarray*}

Now we have that $[h_\star] = [\Gamma_1,\Nu_1; \Gamma_2,\Nu_2;\alpha,\beta]$ in $H^1(\Gamma_1/\Nu_1,\mathcal{K}_1)$. 
Hence there exists $v$ in $\mathrm{Span}(Z(\Nu_1))$ such that 
$$(\tilde\alpha_\sharp^{-1}\Xi_2\beta)\Xi_1^{-1}f_v = (\ov C^{-1}D p_1)_\star.$$

Let $\tilde \alpha_v: V_1 \to V_2$ be the affinity defined by $\tilde \alpha_v = \tilde\alpha(v+\ov I)$. 
Then $\tilde\alpha_v\ov N_1\tilde\alpha_v^{-1}= \ov N_2$ and $(\tilde\alpha_v)_\ast = \tilde\alpha_\ast$ by Lemma 6 of \cite{R-T-Isom}, 
and so $(\tilde\alpha_v)_\ast = \ov \alpha$.
Observe that 
\begin{eqnarray*}
\lefteqn{\big((\tilde\alpha_\sharp^{-1}\Xi_2\beta)\Xi_1^{-1}f_v\big)(\Nu_1(b+B))} \\ 
& =  & (\tilde\alpha_\sharp^{-1}\Xi_2\beta)(\Nu_1(b+B))\Xi_1^{-1}(\Nu_1(b+B)) f_v(\Nu_1(b+B))  \\
& = & \tilde\alpha_\star^{-1}(\Xi_2\beta)(\Nu_1(b+B))\tilde\alpha_\star (\ov b + \ov B)_\star^{-1}(\ov Bv - v +\ov I)_\star \\
& = & \tilde\alpha_\star^{-1}(\Xi_2\beta)(\Nu_1(b+B))\tilde\alpha_\star (\ov b + \ov B)_\star^{-1}(v -\ov Bv +\ov I)_\star^{-1} \\
& = & \tilde\alpha_\star^{-1}(\Xi_2\beta)(\Nu_1(b+B))\tilde\alpha_\star (v -\ov Bv +\ov b + \ov B)_\star^{-1} \\
& = & \tilde\alpha_\star^{-1}(\Xi_2\beta)(\Nu_1(b+B))\tilde\alpha_\star ((v +\ov I)(\ov b + \ov B)(-v+I))_\star^{-1} \\
& = & \tilde\alpha_\star^{-1}(\Xi_2\beta)(\Nu_1(b+B))\tilde\alpha_\star (v +\ov I)_\star (\ov b + \ov B)_\star^{-1}(-v+I)_\star \\
& = & (-v+I)_\star\tilde\alpha_\star^{-1}(\Xi_2\beta)(\Nu_1(b+B))\tilde\alpha_\star (v +\ov I)_\star (\ov b + \ov B)_\star^{-1} \\
& = & (\tilde\alpha_v)_\star^{-1}(\Xi_2\beta)(\Nu_1(b+B))(\tilde\alpha_v)_\star (\ov b + \ov B)_\star^{-1} \\
& = & \big(((\tilde\alpha_v)_\sharp^{-1}\Xi_2\beta)\Xi_1^{-1}\big)(\Nu_1(b+B))
\end{eqnarray*}
Hence we have 
$$(\tilde\alpha_\sharp^{-1}\Xi_2\beta)\Xi_1^{-1}f_v = ((\tilde\alpha_v)_\sharp^{-1}\Xi_2\beta)\Xi_1^{-1}.$$
Thus we have that 
$$(\tilde\alpha_v)_\sharp^{-1}\Xi_2\beta = (\ov C^{-1}D p_1)_\star\Xi_1.$$

Let $\Rho_i:\Gamma_i/\Nu_i \to \Gamma_i'$ be the isomorphism defined by $\Rho_i(\Nu_i\gamma) = \gamma'$ for each $i = 1,2$.  
Let $\beta': \Gamma_1' \to \Gamma_2'$ be the isomorphism so that $\Rho_2^{-1}\beta'\Rho_1 = \beta$. 
Let $\tilde\beta: V_1^\perp \to V_2^\perp$ be an affinity such that $\tilde\beta\Gamma_1'\tilde\beta^{-1} = \Gamma_2'$ and $\tilde\beta_\ast = \beta'$, that is, and $\tilde\beta\gamma'\tilde\beta^{-1}= \beta'(\gamma')$ for each $\gamma$ in $\Gamma_1$. 
Then we have that 
$$(\tilde\alpha_v)_\sharp^{-1}\Xi_2\Rho_2^{-1}\tilde\beta_\ast\Rho_1= (\ov C^{-1}D p_1)_\star\Xi_1.$$ 
Therefore there exists an affinity $\phi = c+ C$ of $E^n$ such that $\phi(\Gamma_1,\Nu_1)\phi^{-1} = (\Gamma_2,\Nu_2)$ 
with $\ov \phi = \tilde\alpha_v$, $\phi' = \tilde\beta$, and $\ov{C'} = D$ by Theorem 3. 
Thus $[\Gamma_1,\Nu_1] = [\Gamma_2,\Nu_2]$. 
\end{proof}

\begin{lemma} % 7
Suppose that $\omega([\Gamma,\Nu]) = \omega([\Gamma_1,\Nu_1])$ with $\mathcal{O} = (\alpha_1)_\#^{-1}\mathcal{O}_1\beta_1$ 
and that $\omega([\Gamma,\Nu]) = \omega([\Gamma_2,\Nu_2])$ with $\mathcal{O} = (\alpha_2)_\#^{-1}\mathcal{O}_2\beta_2$. 
If $[\Gamma,\Nu; \Gamma_i,\Nu_i;\alpha_i,\beta_i]$ for $i = 1, 2$ 
are in the same coset of the image of $\kappa_\ast: H^1(\Gamma/\Nu,\mathcal{C}) \to H^1(\Gamma/\Nu,\mathcal{K})$ in $H^1(\Gamma/\Nu,\mathcal{K})$, 
then $[\Gamma_1,\Nu_1] = [\Gamma_2,\Nu_2]$. 
\end{lemma}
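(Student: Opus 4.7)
The plan is to reduce to Lemma 6. Setting $\alpha=\alpha_2\alpha_1^{-1}:\Nu_1\to\Nu_2$ and $\beta=\beta_2\beta_1^{-1}:\Gamma_1/\Nu_1\to\Gamma_2/\Nu_2$, the two hypotheses $\mathcal{O}=(\alpha_i)_\#^{-1}\mathcal{O}_i\beta_i$ combine to yield $\mathcal{O}_1=\alpha_\#^{-1}\mathcal{O}_2\beta$. Thus $\omega([\Gamma_1,\Nu_1])=\omega([\Gamma_2,\Nu_2])$, with $(\alpha,\beta)$ as witnessing data, and the class $[\Gamma_1,\Nu_1;\Gamma_2,\Nu_2;\alpha,\beta]\in H^1(\Gamma_1/\Nu_1,\mathcal{K}_1)$ is defined. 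I will show this class lies in the image of $\kappa_\ast:H^1(\Gamma_1/\Nu_1,\mathcal{C}_1)\to H^1(\Gamma_1/\Nu_1,\mathcal{K}_1)$ and then quote Lemma 6.

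Next, I choose compatible lifting affinities. Let $\tilde\alpha_i:V\to V_i$ be the affinity used in defining $[\Gamma,\Nu;\Gamma_i,\Nu_i;\alpha_i,\beta_i]$ for $i=1,2$, and take $\tilde\alpha=\tilde\alpha_2\tilde\alpha_1^{-1}:V_1\to V_2$, which lifts $\ov\alpha$. Write
$$c_i=\big((\tilde\alpha_i)_\sharp^{-1}\Xi_i\beta_i\big)\Xi^{-1}:\Gamma/\Nu\to\mathcal{K}\qquad(i=1,2)$$
for the cocycle representing $[\Gamma,\Nu;\Gamma_i,\Nu_i;\alpha_i,\beta_i]$, and
$$c=\big(\tilde\alpha_\sharp^{-1}\Xi_2\beta\big)\Xi_1^{-1}:\Gamma_1/\Nu_1\to\mathcal{K}_1$$
for the cocycle representing $[\Gamma_1,\Nu_1;\Gamma_2,\Nu_2;\alpha,\beta]$. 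Using $\tilde\alpha_\sharp=(\tilde\alpha_2)_\sharp(\tilde\alpha_1)_\sharp^{-1}$, a short manipulation gives
$$c(\beta_1(g))=(\tilde\alpha_1)_\sharp\big(c_2(g)c_1(g)^{-1}\big)\quad\text{for every }g\in\Gamma/\Nu.$$

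Then I invoke naturality. The affinity $\tilde\alpha_1$ conjugates $\ov\Nu$ to $\ov\Nu_1$ and hence carries $\mathrm{Span}(Z(\Nu))$ to $\mathrm{Span}(Z(\Nu_1))$, inducing compatible isomorphisms $(\tilde\alpha_1)_\sharp:\mathcal{C}\to\mathcal{C}_1$ and $(\tilde\alpha_1)_\sharp:\mathcal{K}\to\mathcal{K}_1$ that commute with $\kappa$. Paired with $\beta_1:\Gamma/\Nu\to\Gamma_1/\Nu_1$, this induces an isomorphism $H^1(\Gamma/\Nu,\mathcal{K})\to H^1(\Gamma_1/\Nu_1,\mathcal{K}_1)$ that carries $\mathrm{Im}(\kappa_\ast)$ onto $\mathrm{Im}(\kappa_\ast)$ and, by the identity displayed above, sends $[c_2c_1^{-1}]$ to $[c]$. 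The hypothesis gives $[c_2c_1^{-1}]=[c_2][c_1]^{-1}\in\mathrm{Im}(\kappa_\ast)\subseteq H^1(\Gamma/\Nu,\mathcal{K})$, so $[c]\in\mathrm{Im}(\kappa_\ast)\subseteq H^1(\Gamma_1/\Nu_1,\mathcal{K}_1)$. Lemma 6 then yields $[\Gamma_1,\Nu_1]=[\Gamma_2,\Nu_2]$.

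The main obstacle is the naturality just invoked: verifying that the $\Gamma/\Nu$-action on $\mathcal{K}$ is $\beta_1$-equivariantly pushed to the $\Gamma_1/\Nu_1$-action on $\mathcal{K}_1$ under $(\tilde\alpha_1)_\sharp$. This is not automatic from the explicit formulas for the actions in terms of point groups; it must be extracted from $\mathcal{O}=(\alpha_1)_\#^{-1}\mathcal{O}_1\beta_1$, which controls the conjugation action of $\Gamma$ on $\Nu$ via $\alpha_1$ and $\beta_1$. Once this equivariance is in place, together with its analogue for $\mathcal{C}$, the rest of the argument is routine bookkeeping.
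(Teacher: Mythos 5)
Your proposal is correct and follows essentially the same route as the paper: form $\alpha=\alpha_2\alpha_1^{-1}$, $\beta=\beta_2\beta_1^{-1}$, transport the difference class $[c_2][c_1]^{-1}$ through the change-of-groups isomorphism induced by $(\beta_1,(\tilde\alpha_1)_\sharp)$, check compatibility with $\kappa_\ast$, and invoke Lemma 6. The equivariance you flag as the main obstacle is handled in the paper exactly as you predict: the hypothesis $\mathcal{O}=(\alpha_1)_\#^{-1}\mathcal{O}_1\beta_1$ is used to show $\ov B=\ov C_1^{-1}\ov B_1\ov C_1$ on $\mathrm{Span}(Z(\Nu))$, which is precisely the statement that $(\beta_1,(\tilde\alpha_1)_\ast^{-1})$ is a change-of-groups isomorphism.
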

\begin{proof}
Let $b+B \in \Gamma$, and let $b_1+B_1$ be an element of $\Gamma_1$ such that 
$$\Nu_1(b_1+B_1) = \beta_1(\Nu(b+B)).$$
As  $\mathcal{O} = (\alpha_1)_\#^{-1}\mathcal{O}_1\beta_1$, we have that
\begin{eqnarray*}
(b+B)_\ast \mathrm{Inn}(\Nu)& = & (\alpha_1)_\#^{-1}\mathcal{O}_1\beta_1(\Nu(b+B)) \\
& = & (\alpha_1)_\#^{-1}\mathcal{O}_1(\Nu_1(b_1+B_1)) \\
& = & (\alpha_1)_\#^{-1}((b_1+B_1)_\ast\mathrm{Inn}(\Nu_1)) \\
& = & \alpha_1^{-1}(b_1+B_1)_\ast \alpha_1\mathrm{Inn}(\Nu) \\
& = & (\tilde\alpha_1)_\ast^{-1}(b_1+B_1)_\ast (\tilde\alpha_1)_\ast\mathrm{Inn}(\Nu) \\
& = & (\ov c_1 + \ov C_1)_\ast^{-1}(b_1+B_1)_\ast (\ov c_1 + \ov C_1)_\ast\mathrm{Inn}(\Nu). 
\end{eqnarray*}
The action of $\Gamma/\Nu$ on $\mathcal{C}$ is given by $\Nu(b+B)(u+\ov I) = \ov Bu + \ov I$ 
and is determined by the action of $\Gamma/\Nu$ on $Z(\Nu)$ induced by conjugation. 
Now $\mathrm{Inn}(\Nu)$ acts trivially on $Z(\Nu)$, and so the last computation implies 
that $\ov B = \ov C_1^{-1} \ov B_1\ov C_1$ on $\mathrm{Span}(Z(\Nu))$. 
Hence the pair of isomorphisms 
$$\pi = (\beta_1:\Gamma/\Nu \to \Gamma_1/\Nu_1,(\tilde\alpha_1)^{-1}_\ast:\mathcal{C}_1 \to \mathcal{C})$$ 
is a change of groups isomorphism in the sense of \cite{M} p.\,108. 
Therefore we have an isomorphism $\pi^\ast: H^1(\Gamma_1/\Nu_1,\mathcal{C}_1) \to H^1(\Gamma/\Nu,\mathcal{C})$ 
defined so that if $f_1:\Gamma_1/\Nu_1 \to \mathcal{C}_1$ is a crossed homomorphism, then  $\pi^\ast[f_1] = [\pi^\ast f_1]$ 
where $\pi^\ast f_1: \Gamma/\Nu \to \mathcal{C}$ is the crossed homomorphism defined by 
$\pi^\ast f_1(x)  = (\tilde\alpha_1)^{-1}_\ast(f_1(\beta_1(x)))$. 

Likewise the pair of isomorphisms 
$$\varpi=  (\beta_1:\Gamma/\Nu \to \Gamma_1/\Nu_1,(\tilde\alpha_1)^{-1}_\sharp: \mathcal{K}_1 \to \mathcal{K})$$
is a change of groups isomorphism which induces an isomorphism 
$\varpi^\ast$ such that the following diagram commutes 
$$\begin{array}{ccc}
H^1(\Gamma_1/\Nu_1,\mathcal{C}_1) & {\buildrel \pi^\ast \over \longrightarrow} & H^1(\Gamma/\Nu,\mathcal{C}) \vspace{.05in} \\  
\downarrow (\kappa_1)_\ast &  & \downarrow \kappa_\ast \\
H^1(\Gamma_1/\Nu_1,\mathcal{K}_1) & {\buildrel \varpi^\ast \over \longrightarrow} & H^1(\Gamma/\Nu,\mathcal{K}).
\end{array}$$

Now we have that $\omega([\Gamma_1,\Nu_1]) = \omega([\Gamma_2,\Nu_2])$ 
with $\mathcal{O}_1 = (\alpha_2\alpha_1^{-1})_\#\mathcal{O}_2\beta_2\beta_1^{-1}$. 
Moreover, we have that 
$$[\Gamma_1,\Nu_1;\Gamma_2,\Nu_2;\alpha_2\alpha_1^{-1},\beta_2\beta_1^{-1}] = [((\tilde\alpha_2\tilde\alpha_1^{-1})_\sharp^{-1}\Xi_2\beta_2\beta_1^{-1})\Xi_1^{-1}].$$
For $i = 1, 2$, we have that
$$[\Gamma,\Nu;\Gamma_i,\Nu_i;\alpha_i,\beta_i] = [((\tilde\alpha_i)_\sharp^{-1}\Xi_i\beta_i)\Xi^{-1}].$$
Let $\gamma \in \Gamma$.  Observe that
\begin{eqnarray*}
\lefteqn{\big(((\tilde\alpha_2)_\sharp^{-1}\Xi_2\beta_2)\Xi^{-1}\big)\big(((\tilde\alpha_1)_\sharp^{-1}\Xi_1\beta_1)\Xi^{-1}\big)^{-1}(\Nu\gamma)} \\
& = & ((\tilde\alpha_2)_\sharp^{-1}\Xi_2\beta_2)(\Nu\gamma)\Xi(\Nu\gamma)^{-1}\Xi(\Nu\gamma)\big(((\tilde\alpha_1)_\sharp^{-1}\Xi_1\beta_1)(\Nu\gamma)\big)^{-1} \\
& = & ((\tilde\alpha_2)_\sharp^{-1}\Xi_2\beta_2)(\Nu\gamma)(\tilde\alpha_1)_\sharp^{-1}\big(\Xi_1\beta_1(\Nu\gamma)\big)^{-1} \\
& = & ((\tilde\alpha_2)_\star^{-1}\Xi_2\beta_2)(\Nu\gamma)(\tilde\alpha_2)_\star(\tilde\alpha_1)_\star^{-1}\big(\Xi_1\beta_1(\Nu\gamma)\big)^{-1}(\tilde\alpha_1)_\star \\
& = & (\tilde\alpha_1)_\sharp^{-1}((\tilde\alpha_2\tilde\alpha_1^{-1})_\sharp^{-1}\Xi_2\beta_2)(\Nu\gamma)\big(\Xi_1\beta_1(\Nu\gamma)\big)^{-1} \\ 
& = & (\tilde\alpha_1)_\sharp^{-1}((\tilde\alpha_2\tilde\alpha_1^{-1})_\sharp^{-1}\Xi_2\beta_2\beta_1^{-1}(\beta_1(\Nu\gamma))\big(\Xi_1\beta_1(\Nu\gamma)\big)^{-1} \\ 
& = & \varpi^\ast\big(((\tilde\alpha_2\tilde\alpha_1^{-1})_\sharp^{-1}\Xi_2\beta_2\beta_1^{-1})\Xi_1^{-1}\big)(\Nu\gamma). \\ 
\end{eqnarray*} 
Hence we have that 
$$\varpi^\ast([\Gamma_1,\Nu_1;\Gamma_2,\Nu_2;\alpha_2\alpha_1^{-1},\beta_2\beta_1^{-1}] )=[\Gamma,\Nu;\Gamma_2,\Nu_2;\alpha_2,\beta_2][\Gamma,\Nu;\Gamma_1,\Nu_1;\alpha_1,\beta_1]^{-1}.$$
The right-hand side of the above equation is in the image of $\kappa_\ast: H^1(\Gamma/\Nu,\mathcal{C}) \to H^1(\Gamma/\Nu,\mathcal{K})$. 
Therefore $[\Gamma_1,\Nu_1;\Gamma_2,\Nu_2;\alpha_2\alpha_1^{-1},\beta_2\beta_1^{-1}]$ is in the image of 
$(\kappa_1)_\ast: H^1(\Gamma_1/\Nu_1,\mathcal{C}_1) \to H^1(\Gamma_1/\Nu_1,\mathcal{K}_1)$. 
Hence $[\Gamma_1,\Nu_1] = [\Gamma_2,\Nu_2]$ by Lemma 6. 
\end{proof}

\section{The relative Bieberbach Theorem}

Let $[\Gamma, \Nu]$ be a class in $\mathrm{Isom}(\Delta,\Mu)$,  
and let $\Tau$ be the group of translations of $\Gamma$. 
Then $\Gamma/\Tau\Nu$ is a finite group, since $\Gamma/\Tau$ is finite. 
The group $\Tau\Nu/\Nu$ acts trivially on $Z(\Nu)$, $\Tau\Nu/\Nu$, $\mathcal{C}$, $\mathcal{K}$, 
and so the action of $\Gamma/\Nu$ on $Z(\Nu)$, $\Tau\Nu/\Nu$, $\mathcal{C}$, $\mathcal{K}$ induces 
an action of $\Gamma/\Tau\Nu$ on $Z(\Nu)$, $\Tau\Nu/\Nu$, $\mathcal{C}$, $\mathcal{K}$ making 
$Z(\Nu)$, $\Tau\Nu/\Nu$, $\mathcal{C}$, $\mathcal{K}$ into $(\Gamma/\Tau\Nu)$-modules.  

\begin{lemma}  % 8 
The group $H^1(\Gamma/\Tau\Nu, \mathcal{K})$ is finite. 
\end{lemma}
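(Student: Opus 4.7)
The plan is to run the same long exact sequence argument the author used in Lemma 5, but one degree higher, applied to the short exact sequence of $(\Gamma/\Nu)$-modules $0 \to Z(\Nu) \to \mathcal{C} \to \mathcal{K} \to 0$ displayed just before the Fiber Cohomology Classes section. Since $\Tau\Nu/\Nu$ acts trivially on each of the three terms, this sequence descends to a short exact sequence of $G$-modules for $G = \Gamma/\Tau\Nu$, and the associated long exact sequence in cohomology contains the four-term string
$$H^1(G,\mathcal{C}) \to H^1(G,\mathcal{K}) \to H^2(G,Z(\Nu)) \to H^2(G,\mathcal{C}).$$

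First I would kill the two outer terms. As an abstract group, $\mathcal{C}$ is canonically isomorphic to the real vector space $\mathrm{Span}(Z(\Nu))$, so $\mathcal{C}$ is torsion-free and divisible. Because $G = \Gamma/\Tau\Nu$ is finite (as $\Gamma/\Tau$ is finite), the same Corollary IV.5.4 of \cite{M} invoked by the author in Lemma 5 gives $H^i(G,\mathcal{C}) = 0$ for all $i \geq 1$. Hence the connecting map is an isomorphism $H^1(G,\mathcal{K}) \cong H^2(G,Z(\Nu))$, and the problem is reduced to showing that $H^2(G,Z(\Nu))$ is finite.

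Next I would observe that $Z(\Nu)$ is a finitely generated abelian group. Any central element $c+C$ of the space group $\Nu$ must commute with every translation $b+I$ of $\Nu$, which forces $(I-C)b = 0$; since the translations of $\Nu$ span $V$, this forces $C$ to be the identity on $V$, so $Z(\Nu)$ is contained in the translation subgroup of $\Nu$, a free abelian group of rank $m$. Consequently $Z(\Nu)$ is itself a finitely generated $\integers$-module, and a fortiori a finitely generated $\integers[G]$-module.

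Finally, with $G$ finite and $Z(\Nu)$ a finitely generated $\integers[G]$-module, the cohomology $H^2(G,Z(\Nu))$ is a finitely generated abelian group annihilated by $|G|$, hence finite. The entire argument is essentially formal, parallelling the proof of Lemma 5 one degree higher; the only non-formal step is the identification of $Z(\Nu)$ as a sublattice of the translation subgroup of $\Nu$, which I do not expect to present any serious obstacle.
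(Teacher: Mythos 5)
Your proposal is correct and follows essentially the same route as the paper: the long exact sequence of $0 \to Z(\Nu) \to \mathcal{C} \to \mathcal{K} \to 0$ over the finite group $\Gamma/\Tau\Nu$, vanishing of the $\mathcal{C}$-terms since $\mathcal{C}$ is torsion-free and divisible, and finiteness of $H^2(\Gamma/\Tau\Nu, Z(\Nu))$ because it is a finitely generated torsion group. The only addition is your explicit verification that $Z(\Nu)$ sits inside the translation lattice of $\Nu$ and is therefore free abelian of finite rank, which the paper asserts without proof.
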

\begin{proof}
The short exact sequence $0 \to Z(\Nu) \to \mathcal{C} \to \mathcal{K} \to 0$ of $(\Gamma/\Tau\Nu)$-modules induces 
an exact sequence of cohomology groups
$$H^1(\Gamma/\Tau\Nu,\mathcal{C}) \to H^1(\Gamma/\Tau\Nu,\mathcal{K}) \to H^2(\Gamma/\Tau\Nu,Z(\Nu)) \to H^2(\Gamma/\Tau\Nu,\mathcal{C}).$$
As explained in the proof of Lemma 5, the outside groups are trivial, and so $H^1(\Gamma/\Tau\Nu,\mathcal{K})$ is isomorphic to $H^2(\Gamma/\Tau\Nu,Z(\Nu))$. 
The group $H^2(\Gamma/\Tau\Nu,Z(\Nu))$ is a torsion group by Proposition IV.5.3 of \cite{M}. 
As $Z(\Nu)$ is a free abelian group of finite rank, the group $H^2(\Gamma/\Tau\Nu,Z(\Nu))$ is finitely generated. 
Hence $H^2(\Gamma/\Tau\Nu,Z(\Nu))$ is finite, and so $H^1(\Gamma/\Tau\Nu, \mathcal{K})$ is finite. 
\end{proof}

\begin{lemma} % 9
The cokernel of $\kappa_\ast: H^1(\Tau\Nu/\Nu, \mathcal{C})^{\Gamma/\Tau\Nu} \to H^1(\Tau\Nu/\Nu, \mathcal{K})^{\Gamma/\Tau\Nu}$ is finite. 
\end{lemma}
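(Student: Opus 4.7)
The plan is to reinterpret both $H^1$ groups as Hom groups via the Universal Coefficients Theorem, exploit the free-abelian (hence $\integers$-projective) nature of $\Tau\Nu/\Nu$ to keep the coefficient short exact sequence exact at the Hom level, take $(\Gamma/\Tau\Nu)$-invariants, and bound the resulting cokernel by a first cohomology group of the finite group $\Gamma/\Tau\Nu$. Since $\Tau\Nu/\Nu$ acts trivially on $\mathcal{C}$ and on $\mathcal{K}$, the Universal Coefficients Theorem (p.\,77 of \cite{M}), exactly as applied in the proof of Lemma 5, identifies $H^1(\Tau\Nu/\Nu,\mathcal{C})$ with $\mathrm{Hom}(\Tau\Nu/\Nu,\mathcal{C})$ and $H^1(\Tau\Nu/\Nu,\mathcal{K})$ with $\mathrm{Hom}(\Tau\Nu/\Nu,\mathcal{K})$ as $(\Gamma/\Tau\Nu)$-modules, with $\kappa_\ast$ realized by postcomposition with $\kappa$.

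Next, $\Tau\Nu/\Nu$ is finitely generated free abelian — it has finite index in the translation lattice of the space group $\Gamma/\Nu$ — and hence $\integers$-projective. Therefore applying the exact functor $\mathrm{Hom}(\Tau\Nu/\Nu,-)$ to the short exact sequence $0 \to Z(\Nu) \to \mathcal{C} \to \mathcal{K} \to 0$ of $(\Gamma/\Nu)$-modules yields a short exact sequence of $(\Gamma/\Tau\Nu)$-modules
$$0 \to \mathrm{Hom}(\Tau\Nu/\Nu, Z(\Nu)) \to \mathrm{Hom}(\Tau\Nu/\Nu, \mathcal{C}) \to \mathrm{Hom}(\Tau\Nu/\Nu, \mathcal{K}) \to 0.$$
Taking $(\Gamma/\Tau\Nu)$-invariants and using left exactness, the connecting map in the associated long exact sequence injects the cokernel of $\kappa_\ast^{\Gamma/\Tau\Nu}$ into $H^1(\Gamma/\Tau\Nu,\, \mathrm{Hom}(\Tau\Nu/\Nu, Z(\Nu)))$.

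Finally, $\Gamma/\Tau\Nu$ is finite (since $\Gamma/\Tau$ is) and $\mathrm{Hom}(\Tau\Nu/\Nu, Z(\Nu))$ is a finitely generated abelian group, being Hom between two finitely generated free abelian groups. The cohomology of a finite group with coefficients in a finitely generated abelian module is finitely generated and in positive degrees is torsion by Proposition IV.5.3 of \cite{M}, hence finite. The cokernel is therefore finite. The main thing to watch is that one must pass to the Hom level \emph{before} invoking any long exact sequence: directly applying the coefficient long exact sequence as in Lemma 5 would introduce $H^2(\Tau\Nu/\Nu, Z(\Nu))^{\Gamma/\Tau\Nu}$, a group which can be infinite, so the projectivity of $\Tau\Nu/\Nu$ is essential.
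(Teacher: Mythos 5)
Your proposal is correct and follows essentially the same route as the paper's proof: identify both invariant $H^1$ groups with $\mathrm{Hom}_{\Gamma/\Tau\Nu}(\Tau\Nu/\Nu,-)$ via Universal Coefficients, use the vanishing of $\mathrm{Ext}(\Tau\Nu/\Nu, Z(\Nu))$ (freeness of $\Tau\Nu/\Nu$) to get a short exact sequence of Hom modules, and bound the cokernel by the finite group $H^1(\Gamma/\Tau\Nu, \mathrm{Hom}(\Tau\Nu/\Nu, Z(\Nu)))$. Your closing remark about why one must pass to the Hom level before taking the long exact sequence is a sensible clarification but does not change the argument.
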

\begin{proof}
By the Universal Coefficients Theorem, we have that 
$$H^1(\Tau\Nu/\Nu, \mathcal{C})^{\Gamma/\Tau\Nu} = \mathrm{Hom}(\Tau\Nu/\Nu,\mathcal{C})^{\Gamma/\Tau\Nu} = \mathrm{Hom}_{\Gamma/\Tau\Nu}(\Tau\Nu/\Nu, \mathcal{C}),$$
$$H^1(\Tau\Nu/\Nu, \mathcal{K})^{\Gamma/\Tau\Nu} = \mathrm{Hom}(\Tau\Nu/\Nu,\mathcal{K})^{\Gamma/\Tau\Nu} = \mathrm{Hom}_{\Gamma/\Tau\Nu}(\Tau\Nu/\Nu, \mathcal{K}).$$
The short exact sequence $0 \to Z(\Nu) \to \mathcal{C} \to \mathcal{K} \to 0$ induces 
an exact sequence 
$$0 \to \mathrm{Hom}(\Tau\Nu/\Nu,Z(\Nu)) \to\mathrm{Hom}(\Tau\Nu/\Nu,\mathcal{C}) \to \mathrm{Hom}(\Tau\Nu/\Nu,\mathcal{K}) \to \mathrm{Ext}(\Tau\Nu/\Nu,Z(\Nu))$$
by Theorem III.3.4 of \cite{M} (with $R = \integers$). 
We have that $\mathrm{Ext}(\Tau\Nu/\Nu,Z(\Nu)) = 0$ by Theorems I.6.3 and III.3.5 of \cite{M}, since $\Tau\Nu/\Nu$ is a free abelian group. 
Hence we have a short exact sequence of  $(\Gamma/\Tau\Nu)$-modules
$$0 \to \mathrm{Hom}(\Tau\Nu/\Nu,Z(\Nu)) \to\mathrm{Hom}(\Tau\Nu/\Nu,\mathcal{C}) \to \mathrm{Hom}(\Tau\Nu/\Nu,\mathcal{K}) \to 0.$$
Hence we have an exact sequence of cohomology groups
$$H^0(\Gamma/\Tau\Nu, \mathrm{Hom}(\Tau\Nu/\Nu,\mathcal{C})) \to H^0(\Gamma/\Tau\Nu, \mathrm{Hom}(\Tau\Nu/\Nu,\mathcal{K}))$$ 
$$\to H^1(\Gamma/\Tau\Nu, \mathrm{Hom}(\Tau\Nu/\Nu,Z(\Nu))),$$
which is equivalent to an exact sequence
$$\mathrm{Hom}_{\Gamma/\Tau\Nu}(\Tau\Nu/\Nu,\mathcal{C}) \to \mathrm{Hom}_{\Gamma/\Tau\Nu}(\Tau\Nu/\Nu,\mathcal{K}) \to H^1(\Gamma/\Tau\Nu, \mathrm{Hom}(\Tau\Nu/\Nu,Z(\Nu))).$$
The group $H^1(\Gamma/\Tau\Nu, \mathrm{Hom}(\Tau\Nu/\Nu,Z(\Nu)))$ is finite, since $\mathrm{Hom}(\Tau\Nu/\Nu,Z(\Nu))$ is a free abelian group of finite rank. 
Hence the cokernel of $\mathrm{Hom}_{\Gamma/\Tau\Nu}(\Tau\Nu/\Nu,\mathcal{C})) \to \mathrm{Hom}_{\Gamma/\Tau\Nu}(\Tau\Nu/\Nu,\mathcal{K}))$ is finite. 
Therefore the cokernel of $\kappa_\ast: H^1(\Tau\Nu/\Nu, \mathcal{C})^{\Gamma/\Tau\Nu} \to H^1(\Tau\Nu/\Nu, \mathcal{K})^{\Gamma/\Tau\Nu}$ is finite. 
\end{proof}

\begin{lemma}  % 10 
The cokernel of $\kappa_\ast: H^1(\Gamma/\Nu, \mathcal{C}) \to H^1(\Gamma/\Nu, \mathcal{K})$ is finite. 
\end{lemma}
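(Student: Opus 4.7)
The plan is to compare the map $\kappa_\ast$ at the level of $\Gamma/\Nu$ with its restriction to the translation subgroup $\Tau\Nu/\Nu$, using the Hochschild--Serre five term exact sequence for the normal subgroup $\Tau\Nu/\Nu$ of $\Gamma/\Nu$ with finite quotient $\Gamma/\Tau\Nu$. Since $\Tau\Nu/\Nu$ acts trivially on both $\mathcal{C}$ and $\mathcal{K}$, the invariants $M^{\Tau\Nu/\Nu}$ are just $M$ for $M=\mathcal{C},\mathcal{K}$, and we obtain natural exact sequences of the form
$$H^1(\Gamma/\Tau\Nu,M)\to H^1(\Gamma/\Nu,M)\to H^1(\Tau\Nu/\Nu,M)^{\Gamma/\Tau\Nu}\to H^2(\Gamma/\Tau\Nu,M).$$

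First I would specialize this to $M=\mathcal{C}$. Since $\Gamma/\Tau\Nu$ is finite and $\mathcal{C}$ is a torsion-free divisible abelian group, Corollary IV.5.4 of \cite{M} (as already used in Lemma 5) gives $H^i(\Gamma/\Tau\Nu,\mathcal{C})=0$ for $i=1,2$. The five term sequence therefore collapses to an isomorphism
$$H^1(\Gamma/\Nu,\mathcal{C})\ \xrightarrow{\ \cong\ }\ H^1(\Tau\Nu/\Nu,\mathcal{C})^{\Gamma/\Tau\Nu}.$$
Next I would specialize the sequence to $M=\mathcal{K}$. By Lemma 8 the left-hand term $H^1(\Gamma/\Tau\Nu,\mathcal{K})$ is finite, so the restriction map
$$H^1(\Gamma/\Nu,\mathcal{K})\ \xrightarrow{\mathrm{res}}\ H^1(\Tau\Nu/\Nu,\mathcal{K})^{\Gamma/\Tau\Nu}$$
has finite kernel.

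The key step is then a small diagram chase in the natural commutative square
$$\begin{array}{ccc}
H^1(\Gamma/\Nu,\mathcal{C}) & \xrightarrow{\ \cong\ } & H^1(\Tau\Nu/\Nu,\mathcal{C})^{\Gamma/\Tau\Nu} \\
\downarrow\kappa_\ast & & \downarrow\kappa_\ast \\
H^1(\Gamma/\Nu,\mathcal{K}) & \xrightarrow{\mathrm{res}} & H^1(\Tau\Nu/\Nu,\mathcal{K})^{\Gamma/\Tau\Nu}.
\end{array}$$
The right-hand vertical map has finite cokernel by Lemma 9, the bottom horizontal map has finite kernel as just noted, and the top horizontal map is an isomorphism. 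Given any class $c\in H^1(\Gamma/\Nu,\mathcal{K})$, its image under $\mathrm{res}$ lies in one of finitely many cosets of the image of the right-hand $\kappa_\ast$; pulling back a representative along the top isomorphism and applying commutativity modifies $c$ by an element of $\mathrm{im}(\kappa_\ast)$ so that the result lies in $\mathrm{res}^{-1}$ of a fixed finite set. Since $\ker(\mathrm{res})$ is finite, only finitely many cosets of $\mathrm{im}(\kappa_\ast)$ arise, so $\kappa_\ast$ has finite cokernel.

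I do not expect a serious obstacle here: the only substantive inputs are Lemmas 8 and 9 together with the vanishing of $H^1$ and $H^2$ of the finite group $\Gamma/\Tau\Nu$ with coefficients in the torsion-free divisible module $\mathcal{C}$. The mildest point to verify is that the five term sequences for $\mathcal{C}$ and $\mathcal{K}$ fit into a commutative ladder induced by the short exact sequence $0\to Z(\Nu)\to\mathcal{C}\to\mathcal{K}\to 0$ of $(\Gamma/\Nu)$-modules, so that the vertical maps $\kappa_\ast$ above are compatible with restriction; this is naturality of the Hochschild--Serre sequence.
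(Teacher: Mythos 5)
Your proposal is correct and follows essentially the same route as the paper: both compare the two inflation--restriction sequences for $\Tau\Nu/\Nu \trianglelefteq \Gamma/\Nu$ with coefficients in $\mathcal{C}$ and $\mathcal{K}$, use the vanishing of $H^i(\Gamma/\Tau\Nu,\mathcal{C})$ for $i=1,2$ together with Lemmas 8 and 9, and conclude by a diagram chase. The paper simply packages your explicit chase as an application of the snake lemma, yielding the exact sequence $\mathrm{coker}(\alpha)\to\mathrm{coker}(\beta)\to\mathrm{coker}(\gamma)$ with finite outer terms.
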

\begin{proof}
We have a short exact sequence $1 \to \Tau\Nu/\Nu \to \Gamma/\Nu \to \Gamma/\Tau\Nu \to 1$, and so 
we have a commutative diagram with horizontal exact sequences (cf.\,p.\,354 of \cite{M})
$$\begin{array}{ccccccc}
0= H^1(\Gamma/\Tau\Nu,\mathcal{C}) &\hspace{-.1in} \to \hspace{-.1in} & H^1(\Gamma/\Nu,\mathcal{C}) & \hspace{-.1in}\to \hspace{-.1in}& H^1(\Tau\Nu/\Nu,\mathcal{C})^{\Gamma/\Tau\Nu} & \hspace{-.1in}\to \hspace{-.1in} & H^2(\Gamma/\Tau\Nu,\mathcal{C}) = 0 
\vspace{.05in} \\  
\downarrow \alpha &  & \downarrow \beta  & & \downarrow \gamma & & \downarrow  \vspace{.05in} \\
0\to H^1(\Gamma/\Tau\Nu,\mathcal{K}) & \hspace{-.1in}\to \hspace{-.1in}& H^1(\Gamma/\Nu,\mathcal{K}) & \hspace{-.1in}\to \hspace{-.1in}& H^1(\Tau\Nu/\Nu,\mathcal{K})^{\Gamma/\Tau\Nu} &\hspace{-.1in} \to \hspace{-.1in} & H^2(\Gamma/\Tau\Nu,\mathcal{K}) \phantom{= 0}
\end{array}$$
where the homomorphism $\alpha, \beta, \gamma$ are induced by $\kappa : \mathcal{C} \to \mathcal{K}$. 
By the snake lemma (Lemma III.5.1 of \cite{H-S}), we have an exact sequence
$$\mathrm{coker}(\alpha) \to \mathrm{coker}(\beta) \to \mathrm{coker(\gamma}).$$
Now $\mathrm{coker}(\alpha) = H^1(\Gamma/\Tau\Nu,\mathcal{K})$ is finite by Lemma 8, and $\mathrm{coker(\gamma})$ is finite by Lemma 9. 
Hence $\mathrm{coker(\beta})$ is finite.  
Thus the cokernel of $\kappa_\ast: H^1(\Gamma/\Nu, \mathcal{C}) \to H^1(\Gamma/\Nu, \mathcal{K})$ is finite. 
\end{proof}

\begin{theorem} % 4
For each dimension $n$, there are only finitely many isomorphism classes of pairs of groups $(\Gamma, \Nu)$ such that $\Gamma$ 
is an $n$-space group and $\Nu$ is a normal subgroup of $\Gamma$ such that $\Gamma/\Nu$ is a space group. 
\end{theorem}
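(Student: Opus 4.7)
The plan follows the outline in the introduction. First, I would observe that any pair $(\Gamma,\Nu)$ appearing in the theorem has $\Nu$ a complete normal subgroup of $\Gamma$ by Remark 1. By Theorem 1(3), $\Nu$ is an $m$-space group for $m = \dim\mathrm{Span}(\Nu)$, and by Lemma 1 the quotient $\Gamma/\Nu$ is an $(n-m)$-space group. Classical Bieberbach bounds the number of isomorphism classes of $m$-space groups and $(n-m)$-space groups as $m$ ranges over $0,1,\ldots,n$, so the set of all pairs decomposes as a finite disjoint union of the sets $\mathrm{Iso}(\Delta,\Mu)$, and it suffices to prove that each $\mathrm{Iso}(\Delta,\Mu)$ is finite.

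Next, I would use Lemma 4 to see that $\mathrm{Out}(\Delta,\Mu)$ is finite, so it remains to show that every fiber of $\omega:\mathrm{Iso}(\Delta,\Mu)\to\mathrm{Out}(\Delta,\Mu)$ is finite. Fix a fiber and choose a base class $[\Gamma,\Nu]$ in it. For every class $[\Gamma_i,\Nu_i]$ in this fiber, the equality $\omega([\Gamma,\Nu]) = \omega([\Gamma_i,\Nu_i])$ lets me select isomorphisms $\alpha_i:\Nu_i \to \Nu$ and $\beta_i:\Gamma/\Nu \to \Gamma_i/\Nu_i$ such that $\mathcal{O} = (\alpha_i)_\#^{-1}\mathcal{O}_i\beta_i$. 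These are precisely the data required by the construction preceding Lemma 5 to produce a cohomology class $[\Gamma,\Nu;\Gamma_i,\Nu_i;\alpha_i,\beta_i] \in H^1(\Gamma/\Nu,\mathcal{K})$.

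Finally, I would combine Lemmas 7 and 10. By Lemma 7, if two fiber elements yield cohomology classes lying in the same coset of the image of $\kappa_\ast: H^1(\Gamma/\Nu,\mathcal{C}) \to H^1(\Gamma/\Nu,\mathcal{K})$, then they represent the same isomorphism class of pairs. Hence the assignment $[\Gamma_i,\Nu_i] \mapsto [\Gamma,\Nu;\Gamma_i,\Nu_i;\alpha_i,\beta_i]\cdot\mathrm{im}(\kappa_\ast)$ gives a well-defined injection from the fiber into the cokernel of $\kappa_\ast$, which is finite by Lemma 10. Therefore each fiber is finite, $\mathrm{Iso}(\Delta,\Mu)$ is finite, and the theorem follows. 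The genuinely hard steps, namely the Bieberbach-type rigidity that turns a vanishing cohomology class into an actual affine conjugacy of pairs (Lemmas 6 and 7) and the finiteness of the cokernel of $\kappa_\ast$ (built from the arithmeticity-based Lemma 4 and the cohomological calculations of Lemmas 8--10), have already been carried out in Sections 3--5; the present theorem merely assembles them.
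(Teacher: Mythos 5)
Your proposal is correct and follows essentially the same route as the paper: reduce to finiteness of each $\mathrm{Iso}(\Delta,\Mu)$ via classical Bieberbach, use Lemma 4 for finiteness of $\mathrm{Out}(\Delta,\Mu)$, and bound each fiber of $\omega$ by injecting it (via the classes $[\Gamma,\Nu;\Gamma_i,\Nu_i;\alpha_i,\beta_i]$ and Lemma 7) into the cokernel of $\kappa_\ast$, which is finite by Lemma 10. Your explicit description of how Lemmas 7 and 10 combine is exactly what the paper's terser statement ``the fibers of $\omega$ are finite by Lemmas 7 and 10'' intends.
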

\begin{proof}
Let $m$ be a positive integer less than $n$. 
Let $\Mu$ be an $m$-space group and 
let $\Delta$ be an $(n-m)$-space group.  
Let $\mathrm{Iso}(\Delta,\Mu)$ be the set of isomorphism classes 
of pairs $(\Gamma, \Nu)$ 
where $\Nu$ is a normal subgroup of an $n$-space group $\Gamma$ 
such that $\Nu$ is isomorphic to $\Mu$ and $\Gamma/\Nu$ is isomorphic to $\Delta$. 
As there are only finitely many isomorphism classes of the groups $\Delta$ and $\Mu$ by 
Bieberbach's theorem \cite{B}, it suffices to prove that $\mathrm{Iso}(\Delta,\Mu)$ is finite. 

In \S 4, we defined a function $\omega: \mathrm{Iso}(\Delta,\Mu) \to \mathrm{Out}(\Delta,\mathrm{M})$ 
with $\mathrm{Out}(\Delta,\mathrm{M})$ finite by Lemma 4. 
The fibers of $\omega$ are finite by Lemmas 7 and 10. 
Therefore $\mathrm{Iso}(\Delta,\Mu)$ is finite. 
\end{proof}

In view of Theorem 10 of \cite{R-T}, Theorem 4 is equivalent to the following theorem. 

\begin{theorem} % 5
For each dimension $n$, there are only finitely many affine equivalence classes of geometric orbifold fibrations of compact, connected, flat $n$-orbifolds. 
\end{theorem}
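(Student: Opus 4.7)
The plan is to deduce this statement directly from Theorem 4 using the correspondence established in Theorem 10 of \cite{R-T}. There is no new cohomological or arithmetic input needed at this final step; all of the heavy lifting has already been done in Sections 2--5, culminating in the finiteness of $\mathrm{Iso}(\Delta,\Mu)$.

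Concretely, I would first recall from the introduction and from \cite{R-T} (Theorems 7, 8, and 10) that the affine equivalence classes of geometric orbifold fibrations of compact, connected, flat $n$-orbifolds are in bijection with the isomorphism classes of pairs $(\Gamma,\Nu)$ where $\Gamma$ is an $n$-space group and $\Nu$ is a normal subgroup such that $\Gamma/\Nu$ is a space group. Under this bijection, the base orbifold of the fibration corresponds (up to affine equivalence) to $E^{n-m}/(\Gamma/\Nu)$ and the generic fiber to $E^m/\Nu$, with $m=\dim\mathrm{Span}(\Nu)$. Once this dictionary is in place, the finiteness statement about affine equivalence classes of fibrations is literally a translation of the finiteness statement about isomorphism classes of pairs.

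Thus the proof reduces to two lines: invoke Theorem 10 of \cite{R-T} to identify the two enumeration problems, and then apply Theorem 4, which we have just proved, to conclude that there are only finitely many affine equivalence classes of geometric orbifold fibrations of compact, connected, flat $n$-orbifolds. There is no genuine obstacle at this stage; the only subtlety worth double-checking is that the correspondence from \cite{R-T} matches \emph{affine} equivalence on the geometric side with \emph{isomorphism} on the algebraic side (which is Bieberbach's rigidity theorem applied in a relative form, already implicit in Section 3), so that the bijection really does descend to the equivalence classes in question. Granted that, the theorem is an immediate corollary of Theorem 4.
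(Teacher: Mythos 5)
Your proposal is correct and matches the paper's own treatment: the paper derives this theorem as an immediate consequence of Theorem 4 via the correspondence of Theorem 10 of \cite{R-T} (as stated in the sentence preceding the theorem and in the introduction, citing Theorems 7, 8, and 10 of \cite{R-T}). No further comparison is needed.
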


%%%%%%%%%%%%%%%%%%

\end{document}